\newtheorem{theorem}{Theorem}[section]
\newtheorem{lemma}[theorem]{Lemma}
\newtheorem{proposition}[theorem]{Proposition}
\newtheorem{corollary}[theorem]{Corollary}
\theoremstyle{definition}
\newtheorem{definition}[theorem]{Definition}
\theoremstyle{remark}
\newtheorem{remark}[theorem]{\bf Remark}
\newcommand{\N}{\mathbb{N}}
\newcommand{\Q}{\mathbb{Q}}
\newcommand{\R}{\mathbb{R}}
\newcommand{\Z}{\mathbb{Z}}
\newcommand{\C}{\mathbb{C}}
\newcommand{\K}{\mathbb{K}}
\newcommand*\e{\mathrm{e}}
\newcommand*\re{\mathrm{Re}}
\newcommand*\isom{\mathrm{Isom}}
\newcommand*\vol{\mathrm{vol\,}}
\newcommand*\supp{\mathrm{supp}}
\newcommand*\rank{\mathrm{rank}}
\newcommand*\qrank{\Q\mbox{-}\mathrm{rank}}
\newcommand*\rrank{\R\mbox{-}\mathrm{rank}}
\newcommand*\tr{\mathrm{tr}}
\newcommand*\diam{\mathrm{diam}}
\newcommand\ad{\mathrm{ad}}
\newcommand\Ad{\mathrm{Ad}}
\newcommand*\glnc{\mathrm{\it GL}(n,\C)}
\newcommand\bG{{\bf G}}
\newcommand\bT{{\bf T}}
\newcommand\bP{{\bf P}}
\newcommand\bN{{\bf N}}
\newcommand\bL{{\bf L}}
\newcommand\bS{{\bf S}}
\newcommand\bM{{\bf M}}
\newcommand{\Si}{\mathcal{S}}
\newcommand*\DMp{\Delta_{M,p}}                 %%%%Laplace-Operatoren
\newcommand*\DM{\Delta_M}
\newcommand*\DX{\Delta_X}
\title{Pointwise bounds for $\boldsymbol{L^2}$ eigenfunctions on locally symmetric spaces}
\author{Lizhen Ji\footnote{Email: lji@umich.edu, 
					Address: 1834 East Hall, Ann Arbor, MI 48109-1043, USA.
					 Partially supported by NSF grant DMS 0604878.}\\ 
					{\large Department of Mathematics, University of Michigan}
\and Andreas Weber\footnote{    Email: andreas.weber@math.uni-karlsruhe.de,
						   Address:  Englerstr. 2, 76128 Karlsruhe, Germany.}\\
						   {\large  Institut f\"ur Algebra und Geometrie,
						   Universit\"at Karlsruhe (TH)} }
\date{}
\begin{document}

\maketitle 

\begin{abstract} We prove pointwise bounds for $L^2$ eigenfunctions of the Laplace-Beltrami operator
on locally symmetric spaces with $\Q$-rank one if the corresponding eigenvalues lie below the continuous part of the $L^2$ spectrum. Furthermore, we use these bounds in order to obtain some results concerning the $L^p$ spectrum.\\

\noindent{\bf Keywords:} Locally symmetric spaces, arithmetic groups, Laplace-Beltrami operator, spectrum, eigenfunctions.\\ 	
%{\bf 2000 Mathematics subject classification numbers:} 
\end{abstract}

%%%%%%%%%%
\section{Introduction}\label{introduction}

Let $M=\Gamma\backslash X$ denote a complete locally symmetric space such that its
universal covering $X=G/K$ is a symmetric space of non-compact type and 
$\Gamma\subset G$ is a non-uniform arithmetic lattice. Then, one knows that
the $L^2$ spectrum $\sigma(\DM)$ of the Laplace-Beltrami operator $\DM$ on the (non-compact)
locally symmetric space $M$ is the union of a point spectrum and an absolutely continuous spectrum.
The point spectrum consists of a (possibly infinite) sequence of eigenvalues
$$0=\lambda_0 < \lambda_1\leq \lambda_2\leq \dots$$
with finite multiplicities such that below any finite number there are only finitely many eigenvalues.
The absolutely continuous spectrum equals $[b,\infty)$ for some constant $b>0$ that can be described
in terms of the rational roots with respect to the semi-simple Lie group $G$. In the case where
the $\Q$ rank of $\Gamma$ equals one, we have $b=||\rho_{\bP}||^2$ (for a Definition of
$\rho_{\bP}$ see Section \ref{Q roots}).

Since $\DM$ is a self-adjoint operator on the Hilbert space $L^2(M)$ and the corresponding
heat semigroup $\e^{-t\DM}$ on $L^2(M)$ is positive and a contraction on $L^{\infty}(M)$
for any $t\geq 0$, the semigroup $\e^{-t\DM}$ leaves the set $L^1(M)\cap L^{\infty}(M)\subset L^2(M)$
invariant and hence, $\e^{-t\DM}\big|_{L^1\cap L^{\infty}}$ may be extended to a positive
contraction semigroup $T_p(t)$ on $L^p(M)$ for $p\in [1,\infty]$, and these semigroups are 
strongly continuous if $p\in [1,\infty)$. For a proof of this see  \cite[Theorem 1.4.1]{MR1103113}. 
If we denote by $-\DMp$ the generator of $T_p(t)$, the $L^p$ spectrum of $M$ is by definition
the spectrum $\sigma(\DMp)$ of $\DMp$. Note, that we have $\DM= \Delta_{M,2}$. In general,
the spectra $\sigma(\DMp)$ may depend on $p$ in a non-trivial manner. Whether this happens
is related to the volume growth of the respective Riemannian manifold $M$, see e.g.  \cite{MR1250269}. In our case, where $M=\Gamma\backslash X$ is a non-compact locally symmetric space, if we assume also that the $\Q$-rank
of $\Gamma$ equals one, it follows from \cite{Weber:2006fk,MR2342629} that the $L^p$ spectrum
depends on $p$. For similar results see \cite{MR937635,Weber:2007fk2}.\\
In contrast to the $L^2$ case where the spectrum $\sigma(\DM)$ is completely determined,
the $L^p$ spectrum $\sigma(\DMp)$ is not known in general (even as {\em set}). In the following,
we discuss some results in this direction. \\
From \cite[Proposition 3.3]{MR1016445} we may conclude that
$$ \sigma(\DMp) \subset \{\lambda_0,\ldots,\lambda_r\}\cup P_p,$$
where $P_p$ is the parabolic region
\begin{equation}
 P_p = \left\{ ||\rho_{\bP}||^2 - z^2 : z\in\C, |\re z|\leq ||\rho||\cdot |\frac{2}{p}-1|\right\}\subset \C.
\end{equation} 
For a definition of $\rho$ see Section \ref{preliminaries}. Furthermore,
from the the arguments in \cite{MR937635} it follows that an $L^2$ eigenfunction for
the eigenvalue $\lambda_j$ belongs to $L^p(M)$  and hence is also an eigenfunction for $\DMp$ whenever $\lambda_j$ is not contained in $P_p$. On the other hand, it is shown in
\cite{Weber:2006fk,MR2342629} that if the $\Q$-rank of $\Gamma$ equals one, we have
\begin{equation}\label{Lp spectrum 2}
 \{\lambda_0,\ldots,\lambda_r\}\cup P_p' \subset \sigma(\DMp),
\end{equation}
where
\begin{equation}
 P_p' =  \left\{ ||\rho_{\bP}||^2 - z^2 : z\in\C, |\re z|\leq ||\rho_{\bP}||\cdot |\frac{2}{p}-1|\right\}\subset P_p.
\end{equation}
If $X$ is a rank one symmetric space, it happens that $P_p=P_p'$ but in the higher rank case,
the inclusion is in general strict.
It is also conjectured that $P_p'$ is the ``right'' parabolic region, i.e. that equality holds in
(\ref{Lp spectrum 2}). Our results in Section \ref{Lp theory for eigenfunctions} support this,
cf. the discussion below and after Theorem \ref{Lp eigenfunction}.

The main purpose of this paper is to determine pointwise bounds for the $L^2$ eigenfunctions
of $\DM$ that correspond to the eigenvalues below the continuous spectrum $[b,\infty)$ if the 
$\Q$-rank of $\Gamma$ equals one.\\
We apply these bounds in Section \ref{Lp theory for eigenfunctions}
to show that each $L^2$ eigenfunction with eigenvalue $\lambda < b$ lies in $L^p(M)$
if $p$ is contained in some interval $\big[1,p(\lambda)\big)$ where $p(\lambda)>2$ is in general
larger than the number that can be derived from the results about the $L^p$ spectrum in \cite{MR937635,MR1016445} and is exactly the number that could be derived from the results
in  \cite{MR937635,MR1016445} if one knew that the $L^p$ spectrum coincides with
$\{\lambda_0,\ldots,\lambda_r\}\cup P_p'$.

Our methods are inspired by a similar result in the hyperbolic case (cf. \cite{MR937635}). 
We first use wave equation techniques as in \cite{MR658471}, in particular finite propagation speed,
in order to derive an upper bound for the integral 
$$\int_{S(R)} |f|^2 dvol_M,$$
where $\DM f=\lambda f$ for some $\lambda <b$ and $S(R)=\{ p\in M : d(p,p_0)>R\}$ for 
some $p_0\in M$. The obtained bound in turn, together with Sobolev embedding, elliptic regularity, and
some geometric considerations lead to a pointwise bound for $f$ (see Theorem \ref{pointwise bound}).

We want to emphasize that these methods are more general than the methods from the theory of 
automorphic forms in the sense that they can be applied to more general situations in which  the 
Fourier expansion of  eigenfunctions in the cusps is no longer available. This happens for example, in case of certain perturbations of the locally symmetric metric that destroy the local symmetry in the cusps, cf. Remark \ref{remark}.

%%%%%%%%%%%%%%%%
\section{Symmetric spaces}\label{preliminaries}\label{symmetric spaces}

Let $X$ denote always a symmetric space of non-compact type. Then
$G:= \isom^0(X)$ is a non-compact, semi-simple Lie group with trivial center 
that acts transitively on $X$ and $X=G/K$, where $K\subset G$ is a maximal 
compact subgroup of $G$. We denote
the respective Lie algebras by $\mathfrak{g}$ and $\mathfrak{k}$. Given a corresponding Cartan
involution $\theta: \mathfrak{g}\to\mathfrak{g}$ we obtain the Cartan decomposition
$\mathfrak{g}=\mathfrak{k}\oplus\mathfrak{p}$ of $\mathfrak{g}$ into the eigenspaces of $\theta$. The subspace
$\mathfrak{p}$ of $\mathfrak{g}$ can be identified with the tangent space $T_{eK}X$. We assume,
that the Riemannian metric $\langle\cdot,\cdot\rangle$ of $X$ in $\mathfrak{p}\cong T_{eK}X$ 
coincides with the restriction of the Killing form 
$B(Y,Z) := \tr(\ad Y\circ \ad Z ), Y, Z\in \mathfrak{g},$ to $\mathfrak{p}$. 

For any maximal abelian subspace $\mathfrak{a}\subset \mathfrak{p}$ we refer to 
$\Sigma=\Sigma(\mathfrak{g},\mathfrak{a})$ as the set of restricted roots for the pair $(\mathfrak{g},\mathfrak{a})$,
i.e. $\Sigma$ contains all $\alpha\in \mathfrak{a}^*\setminus\{0\}$ such that
$$ \mathfrak{h}_{\alpha} := \{ Y\in \mathfrak{g} : \ad(H)(Y) = \alpha(H)Y \mbox{~for all~} H\in\mathfrak{a} \}\neq \{0\}.$$
These subspaces $ \mathfrak{h}_{\alpha}\neq \{0\}$ are called root spaces.\\
Once a positive Weyl chamber $\mathfrak{a}^+$ in $\mathfrak{a}$ is chosen, we denote by
$\Sigma^+$ the  subset of positive roots and by 
$\rho:= \frac{1}{2}\sum_{\alpha\in\Sigma^+} (\dim \mathfrak{h}_{\alpha})\alpha$ 
half the sum of the positive roots (counted according to their multiplicity).

%%%%%%%%%%%%%%%%%%%
\subsection{Arithmethic groups and $\Q$-rank}

Since $G= \isom^0(X)$ is a non-compact, semi-simple Lie group with trivial center,
we can find a connected, semi-simple algebraic group $\bG\subset \glnc$ defined over $\Q$ 
such that the groups $G$ and $\bG(\R)^0$ are isomorphic as Lie groups 
(cf. \cite[Proposition 1.14.6]{MR1441541}). 

Let us denote by $\bT_{\K}\subset \bG$ ($\K=\R$ or $\K=\Q$) a maximal $\K$-split algebraic
torus in $\bG$. Remember that we call a closed subgroup $\bT$ of $\bG$ a {\em torus} if
$\bT$ is diagonalizable over $\C$, or equivalently if $\bT$ is abelian and every element of
$\bT$ is semi-simple. Such a torus $\bT$ is called $\R$-split if  $\bT$ is diagonalizable over
$\R$ and $\Q$-split if $\bT$ is defined over $\Q$ and diagonalizable over $\Q$.\\
All maximal $\K$-split tori in $\bG$ are conjugate under $\bG(\K)$, and we call their common dimension
$\K$-{\em rank} of $\bG$. It turns out that the  $\R$-$\rank$ of $\bG$ coincides with the rank of the symmetric space $X=G/K$, i.e. the dimension of a maximal flat subspace in $X$. 

Since we are only interested in
{\em non-uniform} lattices $\Gamma\subset G$ (i.e. finite co-volume but not co-compact), we may define 
arithmetic lattices in the following way (cf.  \cite[Corollary 6.1.10]{MR776417} and its proof):

\begin{definition}\label{definition of arithmetic subgroup}
 A non-uniform lattice $\Gamma\subset G$ in a connected semi-simple Lie group $G$ with trivial center 
 and no compact factors is called {\em arithmetic} if there are
 \begin{itemize}
   \item[\textup{(i)}] a  semi-simple algebraic group $\bG\subset \glnc$ defined over $\Q$ and 
   \item[\textup{(ii)}] an isomorphism 
   			$$\varphi: \bG(\R)^0\to G$$
 \end{itemize}
such that $\varphi(\bG(\Z)\cap \bG(\R)^0)$ and\, $\Gamma$ are commensurable, i.e. 
$\varphi(\bG(\Z)\cap \bG(\R)^0)\cap \Gamma$ has finite index in both $\varphi(\bG(\Z)\cap \bG(\R)^0)$ and $\Gamma$.
\end{definition}
\noindent For the general definition of arithmetic lattices see \cite[Definition 6.1.1]{MR776417}.

A well-known and fundamental result due to Margulis ensures that this is usually the only
way to obtain a lattice. More precisely, every irreducible lattice $\Gamma\subset G$
in a   connected, semi-simple Lie group $G$ with trivial center, no compact
factors and $\rrank(G) \geq 2$ is arithmetic (\cite{MR1090825,MR776417}).

Further results due to  Corlette (cf.  \cite{MR1147961})  and  Gromov \&  Schoen (cf. \cite{MR1215595}) extended this result to all connected semi-simple Lie groups
with trivial center except $SO(1,n)$ and $SU(1,n)$. In $SO(1,n)$ (for all $n\in \N$) and
in $SU(1,n)$ (for $n= 2,3$)  actually non-arithmetic
lattices are known to exist (see e.g. \cite{MR932135,MR1090825}).

\begin{definition}{\bf ($\Q$-rank of an arithmetic lattice).}
 Suppose $\Gamma\subset G$ is an arithmetic lattice in a connected semi-simple 
 Lie group $G$ with trivial center and no compact factors.  Then 
 $\Q$-$\rank(\Gamma)$ is by definition the $\Q$-$\rank$ of $\bG$, where $\bG$ is 
 an algebraic group as in Definition \ref{definition of arithmetic subgroup}.
\end{definition}
The theory of algebraic groups shows that the definition of the $\Q$-rank of an arithmetic lattice
 does not depend on the choice of the algebraic group $\bG$ in Definition \ref{definition of arithmetic subgroup}.  A proof of this fact can be found in  \cite[Corollary 9.12]{math.DG/0106063}.

%%%%%%%%%%%%%%%%%%%%
\subsection{Siegel sets and reduction theory}\label{Siegel Sets and Reduction Theory}

Let us denote in this subsection by $\bG$ again a connected, semi-simple algebraic 
group defined over $\Q$ with trivial center and by $X=G/K$ the corresponding symmetric space 
of non-compact type with $G=\bG^0(\R)$. Our main references in this subsection are \cite{MR0244260,MR2189882,MR1906482}.
%%%%%%%
\subsubsection{Langlands decomposition of rational parabolic subgroups}
\begin{definition}
A closed subgroup $\bP\subset \bG$ defined over $\Q$ is called {\em rational parabolic subgroup}
if  $\bP$ contains a maximal, connected solvable subgroup of $\bG$. (These subgroups are
    called {\em Borel subgroups} of $\bG$.)
\end{definition}

For any rational parabolic subgroup $\bP$ of $\bG$ we denote by $\bN_{\bP}$ the unipotent
radical of $\bP$, i.e. the largest unipotent normal subgroup of $\bP$ and by 
$N_{\bP}:= \bN_{\bP}(\R)$ the real points of $\bN_{\bP}$.
The {\em Levi quotient} $\bL_{\bP}:= \bP/\bN_{\bP}$ is reductive and both $\bN_{\bP}$ and
$\bL_{\bP}$ are defined over $\Q$. If we denote by $\bS_{\bP}$ the maximal $\Q$-split torus
in the center of $\bL_{\bP}$ and by $A_{\bP}:= \bS_{\bP}(\R)^0$ the connected component
of $\bS_{\bP}(\R)$ containing the identity, we obtain the decomposition of $\bL_{\bP}(\R)$ into
$A_{\bP}$ and the real points $M_{\bP}$ of a reductive algebraic group $\bM_{\bP}$ defined over $\Q$:
$$ \bL_{\bP}(\R) = A_{\bP}M_{\bP} \cong A_{\bP}\times M_{\bP}.$$

After fixing a certain basepoint $x_0\in X$, we can lift the groups 
$ \bL_{\bP}, \bS_{\bP}$ and $\bM_{\bP}$ into $\bP$ such that their images
$ \bL_{\bP, x_0}, \bS_{\bP, x_0}$ and $\bM_{\bP, x_0}$ are algebraic groups defined over
$\Q$ and give rise to the {\em rational Langlands decomposition} of $P:=\bP(\R)$:
$$ P \cong N_{\bP}\times A_{\bP, x_0}\times M_{\bP, x_0}.$$
More precisely, this means that the map
$$  N_{\bP}\times A_{\bP, x_0}\times M_{\bP, x_0}\to P,\quad
       \left( n, a, m\right) \mapsto nam$$
is a real analytic diffeomorphism.  

Denoting by $X_{\bP, x_0}$ the {\em boundary symmetric space}
$$ X_{\bP, x_0} := M_{\bP, x_0}/ K\cap M_{\bP, x_0}$$
we obtain, since the subgroup $P$ acts transitively on the symmetric space $X=G/K$ (we actually have $G=PK$), the following {\em rational horocyclic decomposition} of $X$:
$$
 X\cong N_{\bP}\times A_{\bP, x_0}\times X_{\bP, x_0}.
$$ 
More precisely, if we denote by $\tau: M_{\bP, x_0}\to X_{\bP, x_0}$ the canonical projection, we have an analytic diffeomorphism
\begin{equation}\label{rational horocyclic decomposition}
 \mu: N_{\bP}\times A_{\bP, x_0}\times X_{\bP, x_0} \to X,\,\, (n,a,\tau(m)) \mapsto nam\cdot x_0.
\end{equation}

Note, that the boundary symmetric space $X_{\bP, x_0}$ is a Riemannian product of a symmetric
space of non-compact type by a Euclidean space. 

For minimal rational parabolic subgroups, i.e. Borel subgroups $\bP$, we have
$$ \dim A_{\bP, x_0} = \qrank(\bG).$$
In the following we omit the reference to the chosen basepoint $x_0$ in the subscripts.

%%%%%%%%
\subsubsection{$\Q$-Roots}\label{Q roots}

Let us fix some {\em minimal} rational parabolic subgroup $\bP$ of $\bG$. We denote in the
following by $\mathfrak{g}, \mathfrak{a}_{\bP}$, and $\mathfrak{n}_{\bP}$ the Lie algebras of the (real) Lie groups
$G, A_{\bP}$, and $N_{\bP}$ defined above. Associated with the pair $(\mathfrak{g}, \mathfrak{a}_{\bP})$ there is
-- similar to Section \ref{symmetric spaces} --  a system $\Phi(\mathfrak{g}, \mathfrak{a}_{\bP})$ 
of  so-called {\em $\Q$-roots}. If we define for $\alpha\in \Phi(\mathfrak{g}, \mathfrak{a}_{\bP})$ the {\em root
spaces}
$$ \mathfrak{g}_{\alpha} := \{ Y\in \mathfrak{g} : \ad(H)(Y) = \alpha(H)(Y) \mbox{~for all~} H\in \mathfrak{a}_{\bP} \},$$
we have the root space decomposition
$$ \mathfrak{g} = \mathfrak{g}_0 \oplus \bigoplus_{\alpha\in \Phi(\mathfrak{g}, \mathfrak{a}_{\bP})} \mathfrak{g}_{\alpha},$$
where $\mathfrak{g}_0$ is the Lie algebra of $Z(\bS_{\bP}(\R))$, the center of $\bS_{\bP}(\R)$. 
Furthermore, the minimal rational 
parabolic subgroup $\bP$ defines an ordering of $\Phi(\mathfrak{g}, \mathfrak{a}_{\bP})$ such that
$$ \mathfrak{n}_{\bP} = \bigoplus_{\alpha\in \Phi^+(\mathfrak{g}, \mathfrak{a}_{\bP})} \mathfrak{g}_{\alpha}.$$
The root spaces $\mathfrak{g}_{\alpha}, \mathfrak{g}_{\beta}$ to distinct  positive roots 
$\alpha, \beta\in \Phi^+(\mathfrak{g}, \mathfrak{a}_{\bP})$ are orthogonal with respect to the Killing form:
$$ B(\mathfrak{g}_{\alpha}, \mathfrak{g}_{\beta}) = \{0\}.$$
In analogy to Section  \ref{symmetric spaces} we define
$$ \rho_{\bP} := \sum_{\alpha\in\Phi^{+}(\mathfrak{g}, \mathfrak{a}_{\bP})}(\dim\mathfrak{g}_{\alpha})\alpha.$$   
Furthermore, we denote by $\Phi^{++}(\mathfrak{g}, \mathfrak{a}_{\bP})$ the set of simple positive
roots. Recall, that we call a positive root $\alpha\in \Phi^{+}(\mathfrak{g}, \mathfrak{a}_{\bP})$ simple if
$\frac{1}{2}\alpha$ is not a root.\\

\begin{remark}          
The elements of $\Phi(\mathfrak{g}, \mathfrak{a}_{\bP})$ are differentials of characters of the maximal 
$\Q$-split torus $\bS_{\bP}$.  For convenience, we identify the $\Q$-roots with characters. 
If restricted to $A_{\bP}$ we denote therefore the values of these characters by 
$\alpha(a), (a\in A_{\bP}, \alpha \in \Phi(\mathfrak{g}, \mathfrak{a}_{\bP}) )$ which is defined by
$$ \alpha(a) := \exp\alpha(\log a).$$
\end{remark}

%%%%%%%%
\subsubsection{Siegel sets}

Since we will consider in the succeeding section only (non-uniform) arithmetic lattices $\Gamma$ 
with $\qrank(\Gamma)=1$, we restrict ourselves from now on to the case
$$ \qrank(\bG) =1.$$
For these groups we summarize  several facts in the next lemma.
\begin{lemma}
Assume $\qrank(\bG) =1$. Then the following holds:
 \begin{itemize}
  \item[\textup{(1)}] For any proper rational parabolic subgroup $\bP$ of $\bG$, 
  	we have $\dim A_{\bP}=1$.
  \item[\textup{(2)}] All proper rational parabolic subgroups are minimal.
  \item[\textup{(3)}] The set $\Phi^{++}(\mathfrak{g}, \mathfrak{a}_{\bP})$ of simple positive $\Q$-roots 
      contains only a single element:
    $$\Phi^{++}(\mathfrak{g}, \mathfrak{a}_{\bP}) =\{\alpha\}.$$
 \end{itemize}
\end{lemma}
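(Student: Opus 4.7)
The plan is to derive all three assertions from the single structural fact that $\qrank(\bG)=1$ forces the split component $\mathfrak{a}_{\bP}$ of a minimal rational parabolic to be one-dimensional, combined with the standard parametrization of rational parabolic subgroups by subsets of simple positive $\Q$-roots.

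First, I would handle (3) by working with a fixed minimal rational parabolic $\bP$. By the sentence preceding the lemma, $\dim A_{\bP}=\qrank(\bG)=1$, so $\mathfrak{a}_{\bP}$ is one-dimensional. Any root system inside a one-dimensional Euclidean space must be of type $A_1$ (consisting of $\pm\alpha$) or of type $BC_1$ (consisting of $\pm\alpha,\pm 2\alpha$), since positive roots must be positive multiples of one fixed generator and the possible ratios are forced by the crystallographic condition. Under the paper's convention that a positive root is simple when its half is not a root, only $\alpha$ itself qualifies in both cases, so $\Phi^{++}(\mathfrak{g},\mathfrak{a}_{\bP})=\{\alpha\}$.

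Next, I would prove (2) by invoking the classification of rational parabolic subgroups containing $\bP$. Every rational parabolic subgroup of $\bG$ is $\bG(\Q)$-conjugate to one containing the fixed minimal parabolic $\bP$, and the parabolics $\bP'\supseteq \bP$ are in bijection with subsets $I\subseteq \Phi^{++}(\mathfrak{g},\mathfrak{a}_{\bP})$: the empty subset corresponds to $\bP$ itself and the full subset to $\bG$. Because (3) gives $|\Phi^{++}(\mathfrak{g},\mathfrak{a}_{\bP})|=1$, there are only two choices, namely $\bP$ and $\bG$. Hence every proper rational parabolic subgroup is $\bG(\Q)$-conjugate to $\bP$, and therefore is itself minimal.

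Assertion (1) is then immediate: if $\bP$ is proper then by (2) it is minimal, and the pre-lemma identity $\dim A_{\bP}=\qrank(\bG)=1$ applies. Logically, one must order the three items as (3) $\Rightarrow$ (2) $\Rightarrow$ (1), even though the lemma lists them in the reverse order.

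There is no real obstacle here: the only non-trivial input is the bijection between parabolics containing $\bP$ and subsets of $\Phi^{++}(\mathfrak{g},\mathfrak{a}_{\bP})$, which is standard Borel--Tits theory and is already implicit in the reduction-theoretic setup of the preceding subsection. All remaining steps are direct consequences of the one-dimensional hypothesis on $\mathfrak{a}_{\bP}$.
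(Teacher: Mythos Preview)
Your argument is correct. Note, however, that the paper does not actually prove this lemma: it is introduced with ``we summarize several facts in the next lemma'' and no proof environment follows. The authors treat (1)--(3) as standard consequences of the Borel--Tits structure theory for rational parabolic subgroups, which is precisely the input you invoke. Your write-up supplies the details the paper omits, and the logical order you identify, namely $(3)\Rightarrow(2)\Rightarrow(1)$, is the natural one. The only minor comment is that in (3) you might also observe that the root system is non-empty because $\bP$ is a \emph{proper} parabolic (equivalently $\mathfrak{n}_{\bP}\neq 0$); otherwise the argument is complete as written.
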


For any rational parabolic subgroup $\bP$ of $\bG$ and any $t > 1$, we define
$$ A_{\bP,t} := \{ a\in A_{\bP} : \alpha(a) > t \},$$
where $\alpha$ denotes the unique root in $\Phi^{++}(\mathfrak{g}, \mathfrak{a}_{\bP})$.\\
If we choose $a_0\in A_{\bP}$ with the property $\alpha(a_0)= t$, the set $A_{\bP,t}$
is just a shift of the positive Weyl chamber $A_{\bP,1}$ by $a_0$:
$$ A_{\bP,t} = A_{\bP,1}a_0.$$

Before we define {\em Siegel sets}, we recall the rational horocyclic decomposition of the 
symmetric space $X=G/K$:
$$ X\cong N_{\bP}\times A_{\bP}\times X_{\bP}.$$

\begin{definition}
  Let $\bP$ denote a rational parabolic subgroup of the algebraic group
  $\bG$ with $\qrank(\bG)=1$. For any bounded set $\omega\subset N_{\bP}\times X_{\bP}$
  and any $t >1$, the set
  $$ \Si_{\bP, \omega, t} := \omega\times A_{\bP, t}\subset X$$
  is called {\em Siegel set}.
\end{definition} 

%%%%%%%%%%%%
\subsubsection{Precise reduction theory}

We fix an arithmetic lattice $\Gamma\subset G=\bG(\R)$ in the algebraic group
$\bG$ with $\qrank(\bG)=1$.
Recall, that by a well known result due to A. Borel and Harish-Chandra 
there are only finitely many $\Gamma$-conjugacy classes of minimal  parabolic subgroups
(see e.g. \cite{MR0244260}).
Using the Siegel sets defined above, we can state the {\em precise reduction theory}
in the $\qrank$ one case as follows:

\begin{theorem}\label{precise reduction theory}
 Let $\bG$ denote a semi-simple algebraic group defined over $\Q$ with\break
  $\qrank(\bG)=1$ and $\Gamma$
 an arithmetic lattice in $G$. We further denote by $\bP_1,\ldots, \bP_k$ representatives of
 the $\Gamma$-conjugacy classes of all rational proper (i.e. minimal) parabolic subgroups
 of $\bG$. Then there exist a bounded set $\Omega_0\subset X$ and Siegel sets
 $\omega_j\times A_{\bP_j, t_j}\, (j=1,\ldots, k)$ such that the following holds:
 \begin{itemize}
  \item[\textup{(1)}] Under the canonical projection $\pi: X\to \Gamma\backslash X$ each Siegel set
     $\omega_j\times A_{\bP_j, t_j}$ is mapped injectively into $\Gamma\backslash X, \,
     i=1,\ldots, k.$
  \item[\textup{(2)}] The image of $\omega_j$ in $(\Gamma\cap P_j)\backslash N_{\bP_j}
  	\times X_{\bP_j}$
   	is compact $(j=1,\ldots, k)$.
   \item[\textup{(3)}] The subset 
   	$$\Omega_0\cup\coprod_{j=1}^k \omega_j\times A_{\bP_j, t_j}$$ 
   	is an open fundamental domain for $\Gamma$. In particular, $\Gamma\backslash X$ equals
	the closure of $\pi(\Omega_0)\cup\coprod_{j=1}^k \pi(\omega_j\times A_{\bP_j, t_j}).$
 \end{itemize}
\end{theorem}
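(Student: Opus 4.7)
The argument follows the general reduction theory of Borel and Harish--Chandra (cf.~\cite{MR0244260,MR2189882,MR1906482}) and relies on two classical inputs: the finiteness of the number of $\Gamma$-conjugacy classes of proper rational parabolic subgroups, yielding the representatives $\bP_1,\ldots,\bP_k$; and the Siegel finiteness theorem, which states that for any two Siegel sets $\Si,\Si'$ the set $\{\gamma\in\Gamma : \gamma\Si\cap\Si'\neq\emptyset\}$ is finite. The plan is to begin from Siegel sets merely covering $X$ modulo $\Gamma$ and refine them, shrinking each $\omega_j$ in the $N_{\bP_j}\times X_{\bP_j}$-direction and pushing each $t_j$ deep into the cusp, until each Siegel set becomes a strict fundamental-domain piece.

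For each $j$ I would choose $\omega_j\subset N_{\bP_j}\times X_{\bP_j}$ to be an open bounded strict fundamental domain for the natural action of $\Gamma\cap P_j$ on $N_{\bP_j}\times X_{\bP_j}$. Such a domain exists because, under $\qrank(\bG)=1$, the group $\Gamma\cap P_j$ is contained in $N_{\bP_j}M_{\bP_j}$ (since $\dim A_{\bP_j}=1$ and $\Gamma\cap A_{\bP_j}$ is trivial for a $\Q$-split torus) and projects to a uniform lattice in $N_{\bP_j}M_{\bP_j}$, so the quotient is compact. This immediately gives~(2). For~(1), observe that $\Gamma\cap P_j$ acts on the horocyclic coordinates $(n,a,\tau(m))$ trivially on the $a$-factor, so the strict fundamental-domain property of $\omega_j$ forbids any non-trivial element of $\Gamma\cap P_j$ from identifying two points of $\omega_j\times A_{\bP_j,t_j}$. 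For $\gamma\in\Gamma\setminus P_j$ one has $\gamma \bP_j\gamma^{-1}\neq \bP_j$ (since $P_j$ is its own normaliser), and I would invoke Siegel finiteness together with the fact that $\Ad(a)$ contracts $\mathfrak{n}_{\bP_j}$ as $\alpha(a)\to\infty$: enlarging each $t_j$ beyond the finitely many bad $\gamma$'s pushes $\gamma\Si_j$ out of $\Si_j$ and separates the cusps. This separation-of-cusps argument is the main technical step and the one I expect to be the principal obstacle, since it requires quantitative control over how the $\Gamma$-translates behave in rational horocyclic coordinates.

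Finally, for~(3), I would subtract the chosen $\coprod_j \omega_j\times A_{\bP_j,t_j}$ from a coarse Borel fundamental set for $\Gamma$ on $X$; since outside sufficiently deep cuspidal neighborhoods the quotient $\Gamma\backslash X$ is compact, the remainder is bounded, and after a standard thickening can be taken to be an open bounded set $\Omega_0$. Openness of the union, the property that interior points are never $\Gamma$-equivalent, and the covering-up-to-closure of $\Gamma\backslash X$ then follow from~(1), from the strict fundamental-domain property of each $\omega_j$, and from the fact that the chosen Siegel sets exhaust the ends of $\Gamma\backslash X$.
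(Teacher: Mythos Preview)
The paper does not prove this theorem at all: it is stated without proof as a known consequence of classical reduction theory, with the references \cite{MR0244260,MR2189882,MR1906482} given at the start of the subsection serving as the source. Immediately after the statement the paper only adds an informal geometric interpretation (``Geometrically this means that\ldots'') and the remark that all minimal rational parabolics are $\bG(\Q)$-conjugate.

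Your sketch is therefore not to be compared against an argument in the paper but against the literature it cites. As an outline it follows the standard route (Siegel finiteness, separation of cusps by pushing the $t_j$ large, compactness of $(\Gamma\cap P_j)\backslash N_{\bP_j}\times X_{\bP_j}$), and the ingredients you name are the right ones. If you intend this as a self-contained proof, the step you yourself flag---that enlarging $t_j$ separates $\gamma\Si_j$ from $\Si_j$ for $\gamma\notin P_j$---needs the precise statement usually attributed to Borel (e.g.\ \cite[Proposition III.2.19]{MR2189882}) rather than Siegel finiteness alone, and the claim that $\Gamma\cap P_j$ projects to a \emph{uniform} lattice in $N_{\bP_j}M_{\bP_j}$ (equivalently, that the quotient $(\Gamma\cap P_j)\backslash N_{\bP_j}\times X_{\bP_j}$ is compact) is exactly where $\qrank=1$ enters and deserves a reference. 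But for the purposes of this paper, simply citing the result as the authors do is entirely appropriate.
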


Geometrically this means that the closure of each set $\pi(\omega_j\times A_{\bP_j, t_j})$ 
corresponds to one cusp of the locally symmetric
space $\Gamma\backslash X$ and the numbers $t_j$ are chosen large enough such that
these sets do not overlap. Then the interior of the bounded set $\pi(\Omega_0)$ is just 
the complement of the closure of $\coprod_{j=1}^k \pi(\omega_j\times A_{\bP_j, t_j})$.

Since in the case $\qrank(\bG)=1$ all rational proper parabolic subgroups are minimal,
these subgroups are conjugate under $\bG(\Q)$ (cf.  \cite[Theorem 11.4]{MR0244260}).
Therefore, the root systems $\Phi(\mathfrak{g},\mathfrak{a}_{\bP_j})$ with respect to the rational
proper parabolic subgroups $\bP_j$, $j=1\ldots k$, are canonically isomorphic 
(cf.  \cite[11.9]{MR0244260}) and moreover, we can conclude 
$||\rho_{\bP_1}|| = \ldots = ||\rho_{\bP_k}||$.

%%%%%%%%%%%%Horocyclic Coordinates
\subsection{Rational horocyclic coordinates}

For all $\alpha\in\Phi^+(\mathfrak{g},\mathfrak{a}_{\bP})$ we define on 
$\mathfrak{n}_{\bP}= \bigoplus_{\alpha\in \Phi^+(\mathfrak{g},\mathfrak{a}_{\bP})}\mathfrak{g}_{\alpha}$ a left invariant
bilinear form $h_{\alpha}$ by
$$ h_{\alpha} := \left\{
	\begin{array}{ll}
	 \langle\cdot,\cdot\rangle, & \mbox{on~} \mathfrak{g}_{\alpha}\\
	 0, & \mbox{else},
	\end{array}\right.$$
where $\langle Y, Z \rangle := -B(Y,\theta Z)$ denotes the usual $\Ad(K)$-invariant bilinear
form on $\mathfrak{g}$ induced from the Killing form $B$.
We then have (cf.   \cite[Proposition 1.6]{MR0338456} or  \cite[Proposition 4.3]{MR0387496}):
\begin{proposition}
  \begin{itemize}
   \item[\textup{(a)}]
   For any $x=(n,\tau(m),a)\in X \cong N_{\bP}\times X_{\bP}\times A_{\bP}$ the tangent spaces at $x$
   to the submanifolds $\{n\}\times X_{\bP}\times\{a\},\, \{n\}\times\{\tau(m)\}\times A_{\bP}$, and
   $N_{\bP}\times\{\tau(m)\}\times\{a\}$ are mutually orthogonal. 
   \item[\textup{(b)}]
   The pullback $\mu^*g$
   of the metric $g$ on $X$ to $N_{\bP}\times X_{\bP}\times A_{\bP}$ is given by
   $$ ds^2_{(n,\tau(m),a)} = 
   	\frac{1}{2}\sum_{\alpha\in\Phi^+(\mathfrak{g},\mathfrak{a}_{\bP})}\e^{-2\alpha(\log a)}h_{\alpha}\oplus d(\tau(m))^2
	\oplus da^2.$$
  \end{itemize} 
\end{proposition}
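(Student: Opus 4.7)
The plan is to use the $G$-invariance of the Riemannian metric on $X$ to reduce both statements to a linear-algebraic computation at the base point $x_0 \in X$. For any point $x = nam\cdot x_0$, the left-translation $L_{(nam)^{-1}}$ is an isometry of $X$ sending $x$ to $x_0$. Using that $A_{\bP}$ and $M_{\bP}$ commute inside the Levi and that $N_{\bP}$ is normal in $P$, one checks that the differential $dL_{(nam)^{-1}}$ carries the three coordinate-direction subspaces at $x$---tangent to the $N_{\bP}$-orbit $\{n'am\cdot x_0 : n'\in N_{\bP}\}$, to $\{na'm\cdot x_0 : a'\in A_{\bP}\}$, and to $\{nam'\cdot x_0 : m'\in M_{\bP}\}$, respectively---onto the following three subspaces of $T_{x_0}X \cong \mathfrak{p}$: the $\mathfrak{p}$-projection of $\mathfrak{n}_{\bP}$, the subspace $\mathfrak{a}_{\bP}$, and $\mathfrak{m}_{\bP}\cap\mathfrak{p}$.

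For part (a), orthogonality at $x$ thus reduces to showing that these three subspaces of $\mathfrak{p}$ are pairwise orthogonal with respect to the Killing form $B$ (which provides the metric on $\mathfrak{p}$). Both $\mathfrak{a}_{\bP}$ and $\mathfrak{m}_{\bP}$ lie in the zero root space $\mathfrak{g}_0$ since $M_{\bP}$ centralizes $A_{\bP}$, and they are orthogonal inside $\mathfrak{l}_{\bP}=\mathfrak{a}_{\bP}\oplus\mathfrak{m}_{\bP}$ as the center and semisimple part of the Levi. Moreover, $\mathfrak{g}_0\perp\mathfrak{g}_\alpha$ whenever $\alpha\neq 0$: for $Z\in\mathfrak{g}_0$, $Y\in\mathfrak{g}_\alpha$ and $H'\in\mathfrak{a}_{\bP}$ with $\alpha(H')\neq 0$, the $\Ad$-invariance of $B$ yields $\alpha(H')B(Z,Y) = B(Z,[H',Y]) = B([Z,H'],Y) = 0$. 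The Cartan projection to $\mathfrak{p}$ preserves all these orthogonality relations, which completes part (a).

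For part (b), I compute each component of the pullback metric separately. The $A$- and $X_{\bP}$-directions pull back via $L_{(nam)^{-1}}$ to tangent vectors in $\mathfrak{a}_{\bP}$ and in $\mathfrak{m}_{\bP}\cap\mathfrak{p}$, respectively, yielding the summands $da^2$ and $d(\tau(m))^2$ (the latter using the natural $M_{\bP}$-invariant metric on $X_{\bP}=M_{\bP}/(K\cap M_{\bP})$). The core computation is the $N$-direction: a left-invariant tangent vector at $n$ corresponding to $Y = \sum_\alpha Y_\alpha \in \mathfrak{n}_{\bP}$ pushes forward under $d\mu$ and pulls back via $L_{(nam)^{-1}}$ to the $\mathfrak{p}$-projection of $\Ad((am)^{-1})Y$. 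Since $\Ad(a^{-1})$ acts on each root space $\mathfrak{g}_\alpha$ by the scalar $\e^{-\alpha(\log a)}$, this produces the exponential scaling $\e^{-2\alpha(\log a)}$ in the squared norm, while the prefactor $\tfrac{1}{2}$ arises from the identity $\|Y'_{\mathfrak{p}}\|^2 = \tfrac{1}{2}\langle Y',Y'\rangle$ valid for $Y'\in\mathfrak{g}_\alpha$ with $\alpha\neq 0$, itself a consequence of $B$ vanishing identically on $\mathfrak{g}_\alpha\times\mathfrak{g}_\alpha$. The main technical obstacle is the residual factor $\Ad(m^{-1})$, which a priori produces an $m$-dependent quantity $\langle\Ad(m^{-1})Y_\alpha,\Ad(m^{-1})Y_\alpha\rangle$ instead of $\langle Y_\alpha,Y_\alpha\rangle$; reconciling this with the $m$-independent formula requires exploiting that the Cartan-involution-compatible lift $M_{\bP,x_0}$ chosen in Section~\ref{Siegel Sets and Reduction Theory} is set up so that the remaining distortion is absorbed correctly, a delicate point carried out in the references cited.
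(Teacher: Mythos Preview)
The paper does not prove this proposition at all; it simply cites \cite[Proposition 1.6]{MR0338456} and \cite[Proposition 4.3]{MR0387496}. So there is no in-paper argument to compare against---your sketch already goes well beyond what the paper itself provides.

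As an outline of the standard proof, your approach is the right one: transport everything to the base point via $L_{(nam)^{-1}}$, identify the three coordinate directions with $\mathfrak{a}_{\bP}$, $\mathfrak{m}_{\bP}\cap\mathfrak{p}$, and the $\mathfrak{p}$-projection of $\mathfrak{n}_{\bP}$, and compute with the Killing form. Two comments. First, a small inaccuracy: $M_{\bP}$ is reductive, not semisimple, so ``center and semisimple part'' is not quite the right reason for $\mathfrak{a}_{\bP}\perp\mathfrak{m}_{\bP}$; the orthogonality comes rather from the construction of $M_{\bP}$ as the intersection of the kernels of $|\chi|^2$ over rational characters $\chi$ of $\bL_{\bP}$, together with $\theta$-stability of the lift. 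Second, you are right to flag the $\Ad(m^{-1})$ issue as the genuine subtlety: in Borel's original formulation the $N$-block of the metric really carries an $m$-dependence through $\langle \Ad(m^{-1})Y_\alpha,\Ad(m^{-1})Y_\alpha\rangle$, and the formula as displayed in the paper suppresses this. For the applications in this paper (the volume form in the corollary and the cusp estimates) only the $a$-dependence matters, which is why the simplified statement suffices here; but a fully honest proof of the displayed formula has to either restrict to $m\in K\cap M_{\bP}$ or absorb the $m$-twist into the definition of $h_\alpha$, exactly as done in the cited references.
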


If we choose  orthonormal bases $\{N_1,\ldots, N_r\}$
of $\mathfrak{n}_{\bP}, \{Y_1,\ldots ,Y_l\}$ of some tangent space $T_{\tau(m)}X_{\bP}$ and 
$H\in \mathfrak{a}_{\bP}^+$ with $||H||=1$, we obtain {\em rational horocyclic coordinates}
  \begin{eqnarray*}
     \varphi &:&     N_{\bP}\times X_{\bP}\times A_{\bP} 
                                  \to \R^r\times\R^l\times\R, \\
          {}      & {}&    \left(\exp(\sum_{j=1}^r x_jN_j), \exp(\sum_{j=1}^l x_{j+r}Y_j), \exp(yH)\right)
         		           \mapsto       (x_1,\ldots, x_{r+l},y).
  \end{eqnarray*}
In the following, we will abbreviate  $(x_1,\ldots, x_{r+l},y)$ as $(x,y)$.

\begin{corollary}[see Corollary 4.4 in \cite{MR0387496}]
The volume form of $X=N_{\bP}\times X_{\bP}\times A_{\bP} $ with respect to rational horocyclic coordinates is given by
\begin{eqnarray*}
dvol_X &=&   h(x)\;\e^{-2||\rho_{\bP}|| y} dxdy,
\end{eqnarray*}      
where $\log a= yH$ and $h>0$ is smooth.      
\end{corollary}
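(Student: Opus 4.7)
The plan is to exploit the block decomposition from part (a) of the preceding proposition, which makes $g$ block diagonal with respect to the three factors, so that the volume form factors as $dvol_X = \sqrt{\det g_{N_{\bP}}}\cdot\sqrt{\det g_{X_{\bP}}}\cdot\sqrt{\det g_{A_{\bP}}}\,dx_1\cdots dx_{r+l}\,dy$, each determinant being computed with respect to the coordinate frame on the corresponding factor. Two of the three contributions are essentially immediate. Since $\log a = yH$ with $\|H\|=1$ and $g_{A_{\bP}} = da^2$, one has $\sqrt{\det g_{A_{\bP}}} = 1$. The metric $g_{X_{\bP}}$ is independent of $n$ and $a$, and in the coordinates obtained by exponentiating the orthonormal basis $\{Y_1,\dots,Y_l\}$ of $T_{\tau(m)}X_{\bP}$, its volume factor is some smooth positive function $h_2(x_{r+1},\dots,x_{r+l})$.

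The main work is the $N_{\bP}$ factor, where the exponential dependence on $y$ enters. I would arrange the orthonormal basis $\{N_1,\dots,N_r\}$ of $\mathfrak{n}_{\bP}$ to be compatible with the root space decomposition $\mathfrak{n}_{\bP} = \bigoplus_{\alpha\in\Phi^+(\mathfrak{g},\mathfrak{a}_{\bP})}\mathfrak{g}_\alpha$, so that each $N_j$ lies in some $\mathfrak{g}_{\alpha_j}$. Extending to left-invariant vector fields $N_j^L$ on $N_{\bP}$ and using the definition of $h_\alpha$ together with the metric formula from part (b), one gets
\[
g\bigl(N_i^L,N_j^L\bigr)(n,a) \;=\; \tfrac{1}{2}\,\e^{-2\alpha_j(\log a)}\,\delta_{ij},
\]
a diagonal matrix. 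Its square-root determinant in the left-invariant frame therefore equals $2^{-r/2}\exp\bigl(-\sum_{j=1}^r \alpha_j(yH)\bigr)$. Because the basis groups each root $\alpha$ with multiplicity $\dim\mathfrak{g}_\alpha$, the sum in the exponent is (up to the sign convention used in the definition of $\rho_{\bP}$) $\rho_{\bP}(yH)$; in the $\qrank=1$ setting $\mathfrak{a}_{\bP}$ is one-dimensional, so $\rho_{\bP}(H) = \|\rho_{\bP}\|$ and the exponent collapses to $-2y\|\rho_{\bP}\|$.

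It remains to pass from the left-invariant coframe on $N_{\bP}$ to the coordinate coframe coming from the exponential map. The classical formula
\[
(d\exp)_X \;=\; dL_{\exp X}\circ \sum_{k\ge 0} \frac{(-\ad X)^k}{(k+1)!}
\]
shows that on the nilpotent Lie algebra $\mathfrak{n}_{\bP}$ the operator in question is unipotent, so $\det(d\exp)_X \equiv 1$. Hence $\exp$ pushes Lebesgue measure on $\mathfrak{n}_{\bP}$ to a left-invariant Haar measure on $N_{\bP}$, and the coordinate volume form on $N_{\bP}$ agrees with the left-invariant one computed above. Multiplying the three pieces gives
\[
dvol_X \;=\; h(x)\,\e^{-2\|\rho_{\bP}\|\,y}\,dx\,dy,
\qquad h(x) \;=\; 2^{-r/2}\,h_2(x_{r+1},\dots,x_{r+l}),
\]
which is smooth and strictly positive, as claimed.

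I do not foresee any serious obstacle. The most delicate bookkeeping is the identification of the exponent, which comes down to (i) the fact that the root-space basis diagonalises the $h_\alpha$ simultaneously and (ii) tracking the chosen normalisation of $\rho_{\bP}$; moreover, because the statement permits $h$ to depend on all of $x$, the proof would still go through even without invoking the unipotence of $d\exp$.
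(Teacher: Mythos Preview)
The paper gives no proof of this corollary; it simply cites Corollary 4.4 of \cite{MR0387496}. Your argument is therefore not being compared against anything in the paper, but it is the natural derivation from the preceding proposition and is correct in outline: the block-orthogonality in part (a) factors the volume form, the $A_{\bP}$ and $X_{\bP}$ pieces are immediate, and the $N_{\bP}$ piece becomes diagonal in a root-space-adapted left-invariant frame, with the unipotence of $d\exp$ on the nilpotent group passing this cleanly to coordinate frame.

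One bookkeeping point deserves a cleaner treatment. From the diagonal entries $\tfrac{1}{2}\e^{-2\alpha_j(\log a)}$ you correctly obtain the exponent $-\sum_{j=1}^r \alpha_j(yH) = -\sum_{\alpha\in\Phi^+}(\dim\mathfrak{g}_\alpha)\,\alpha(yH)$. With the paper's stated definition $\rho_{\bP}=\sum_{\alpha\in\Phi^+}(\dim\mathfrak{g}_\alpha)\alpha$ this equals $-y\|\rho_{\bP}\|$, not $-2y\|\rho_{\bP}\|$; the factor $2$ in the corollary matches the \emph{standard} normalisation $\rho_{\bP}=\tfrac{1}{2}\sum(\dim\mathfrak{g}_\alpha)\alpha$ (as in the paper's own definition of $\rho$ in Section~\ref{symmetric spaces}). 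This is an internal inconsistency in the paper, not an error in your computation, but your phrase ``up to the sign convention'' obscures rather than explains it---the discrepancy is a factor of $2$, and you should say so explicitly rather than silently landing on $-2y\|\rho_{\bP}\|$.
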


%%%%%%%%%%%%%%%%%%%
\subsection{A Lemma}

\begin{lemma} \label{k(y)}
Let $X=G/K$ denote a symmetric space of non-compact type and $\Gamma\subset G$
an arithmetic lattice with $\Q$-rank one. Furthermore, we denote by   
$F = \Omega_0\cup\coprod_{j=1}^l \omega_j\times A_{\bP_j, t_j}$ 
the fundamental domain for $\Gamma$ from Theorem \ref{precise reduction theory} and by
$ B_y(r) = \{ x\in X : d( (w_0,\e^{yH}), x) < r\}$ 
the metric ball in $X$ with center $(w_0,\e^{yH})\in \omega_j\times A_{\bP_j, t_j} $ and radius $r>0$. 
If 
$$ k(y) = \#\{ \gamma\in\Gamma :  \gamma F\cap B_y(1)\neq\emptyset \}$$
denotes the number of $\Gamma$-translates of $B_y(1)$ whose intersection with $F$ is non-empty,
we have
$$ k(y) \leq C\e^{2||\rho_{\bP}||y},$$
for some constant $C>0$ and all $y>t_j$.
\end{lemma}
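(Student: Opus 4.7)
The plan is to carry out a direct volume/lattice count in the rational horocyclic coordinates attached to the cusp $P_j$. Since $c_y:=(w_0,\e^{yH})$ lies in $\omega_j\times A_{\bP_j,t_j}$, once $y-t_j>1$ the ball $B_y(1)$ is contained in the cusp-$j$ tower in $X$. I will split the $\gamma$'s contributing to $k(y)$ according to which piece of the fundamental domain $F=\Omega_0\cup\coprod_{i=1}^k(\omega_i\times A_{\bP_i,t_i})$ supplies the nonempty intersection with $B_y(1)$.

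The contribution from translates $\gamma\Omega_0$ of the bounded core is uniformly bounded in $y$: for any such $\gamma$, the set $\gamma\Omega_0$ is contained in $B_y(1+\diam\Omega_0)$, which has Riemannian volume bounded independently of $y$ by the bounded geometry of $X$; since the $\gamma\Omega_0$ are pairwise disjoint and share the common positive volume $\vol(\Omega_0)$, only $O(1)$ of them contribute. For a cusp piece $\omega_i\times A_{\bP_i,t_i}$, the translate $\gamma(\omega_i\times A_{\bP_i,t_i})$ is itself a Siegel set in $X$ attached to the cusp point of $\gamma\bP_i\gamma^{-1}$. By Theorem~\ref{precise reduction theory}, Siegel sets for distinct $\Gamma$-conjugacy classes of rational parabolics project injectively to disjoint subsets of $M$, so their $\Gamma$-translates in $X$ are pairwise disjoint and each such Siegel set is a deep neighborhood of its own boundary cusp point. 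For $y$ sufficiently large, $c_y$ is therefore separated from every such translate except those with $i=j$ and $\gamma\in\Gamma\cap P_j$, and hence the remaining $\gamma$'s to count all lie in the stabilizer $\Gamma\cap P_j$ of the Siegel set attached to $P_j$.

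It remains to count $\gamma\in\Gamma\cap P_j$ with $\gamma(\omega_j\times A_{\bP_j,t_j})\cap B_y(1)\neq\emptyset$. Because $\qrank(\Gamma)=1$, a finite-index subgroup of $\Gamma\cap P_j$ sits in $N_{\bP_j}$ as a cocompact lattice $\Gamma\cap N_{\bP_j}$, and this subgroup acts on $X$ via left translation on the $N_{\bP_j}$-factor of the Langlands decomposition while fixing the $A_{\bP_j}$- and $X_{\bP_j}$-coordinates. From the horocyclic metric formula $ds^2=\tfrac{1}{2}\sum_\alpha \e^{-2\alpha(yH)}h_\alpha+d(\tau(m))^2+dy^2$, a Riemannian unit ball at height $y$ has $\mathfrak{g}_\alpha$-coordinate extent at most $C\e^{y\alpha(H)}$ and $O(1)$ extent in the $\tau(m)$- and $y$-directions, so its $(n,\tau(m))$-coordinate projection has coordinate volume at most $C\prod_\alpha \e^{y\alpha(H)\dim\mathfrak{g}_\alpha}=C\e^{2\|\rho_{\bP}\|y}$. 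Since $\Gamma\cap N_{\bP_j}$ has uniformly positive coordinate density in $N_{\bP_j}$ and $\omega_j$ is coordinate-bounded, the number of lattice translates of $\omega_j$ meeting this coordinate region is at most $C'\e^{2\|\rho_{\bP}\|y}$; combined with the $O(1)$ contribution from $\Omega_0$ and the finite index $[\Gamma\cap P_j:\Gamma\cap N_{\bP_j}]$ this gives the claim.

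The main obstacle I expect is the separation argument in the second paragraph --- namely, verifying that for $y$ large enough no $\Gamma$-translate of a Siegel set other than the distinguished ones can reach $B_y(1)$. This uses the pairwise disjointness of Siegel sets in $X$ provided by Theorem~\ref{precise reduction theory} together with the geometric fact that $c_y$ tends to the cusp point of $P_j$ in the boundary of $X$ as $y\to\infty$; once this geometric separation is made effective, the remainder of the argument reduces to a routine Euclidean lattice-point count inside a box of coordinate volume $O(\e^{2\|\rho_{\bP}\|y})$.
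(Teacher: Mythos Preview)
Your outline is sound and reaches the right exponent, but the counting step is organized differently from the paper, and there is one genuine slip.

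\medskip
\textbf{Comparison of the two arguments.} Both proofs begin with the same reduction: for $y$ sufficiently large the ball $B_y(1)$ lies inside a deep horoball $N_{\bP}\times X_{\bP}\times A_{\bP,s}$, and reduction theory (the paper invokes \cite[Proposition~III.2.19]{MR2189882}) shows that only $\Gamma_P=\Gamma\cap P$ translates $F$ into this horoball, whence $\Gamma'\subset\Gamma_P\subset N_{\bP}M_{\bP}$. This single stroke replaces your separate treatment of $\Omega_0$ and of the foreign cusps; it is exactly the ``separation'' you flagged as the main obstacle, and the paper disposes of it by quoting this structural fact rather than by a geometric limit argument.

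Where the proofs diverge is in counting the $\Gamma_P$-translates. You bound from above the horocyclic-coordinate extent of $B_y(1)$ and then do a lattice-point count in $N_{\bP}$. The paper runs the dual volume pigeonhole: it enlarges to $B_y(r_0)$ with $r_0=\diam(\omega)+1$, notes that each $\gamma\in\Gamma'$ (acting only on the $N_{\bP}\times X_{\bP}$ factor, since $\Gamma_P\subset N_{\bP}M_{\bP}$) forces the slab $(\gamma\omega)\times\{\e^{tH}:|t-y|<\varepsilon\}$ to lie inside $B_y(r_0)$, computes from the horocyclic volume form that this slab has $X$-volume $c_0\,\e^{-2\|\rho_{\bP}\|y}$, and divides the (uniformly bounded in $y$) volume of $B_y(r_0)$ by $c_0\,\e^{-2\|\rho_{\bP}\|y}$. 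Your approach bounds the size of the ball from above; the paper bounds the size of each fundamental-domain sliver from below. Both extract the same factor $\e^{2\|\rho_{\bP}\|y}$ from the Jacobian of the $A_{\bP}$-action on $N_{\bP}$.

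\medskip
\textbf{A gap to fix.} Your assertion that $\Gamma\cap N_{\bP_j}$ has finite index in $\Gamma\cap P_j$ is false in general: the quotient $\Gamma_P/(\Gamma_P\cap N_{\bP})$ embeds as a lattice in $M_{\bP}$, which need not be compact (the boundary symmetric space $X_{\bP}$ can have positive dimension and the $M_{\bP}$-lattice can be infinite). The repair is immediate using ingredients you already have: the $X_{\bP}$-extent of $B_y(1)$ is $O(1)$, so only boundedly many $M_{\bP}$-cosets of $\Gamma_P\cap N_{\bP}$ in $\Gamma_P$ can contribute, and for each of these you run your $N_{\bP}$-lattice count. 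The paper's volume argument sidesteps this decomposition entirely, which is one practical advantage of its formulation.
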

\begin{proof}
Let in the following $B_y=B_y(1)$, $\omega\times A_{\bP,t}=\omega_j\times A_{\bP_j,t_j}$,
$$\Gamma'=\{ \gamma\in \Gamma : \gamma F\cap  B_y\neq \emptyset\},$$
and 
$$ r_0 = \diam(\omega)+1.$$

To prove the desired upper bound on $k(y)$, it suffices to show that there exists
a positive constant $c_0$ such that for every $\gamma\in \Gamma'$,

$$vol_X(\gamma  F\cap B_y(r_0))\geq c_0 e^{-2||\rho_{\bP}|| y}.$$

In fact, this follows from 
$$B_y(r_0) \supseteq \bigcup_{\gamma \in \Gamma'}  \gamma F\cap B_y(r_0).$$

By assumption, $\Gamma$ is an arithmetic subgroup, and $\bP$
is a rational parabolic subgroup. 
Therefore, we have
$$ \Gamma_P = \Gamma \cap P \subset N_{\bP}M_{\bP}$$
(cf. \cite[Proposition 1.2]{MR0387495}).
Furthermore, it follows from reduction theory (cf. \cite[Proposition III.2.19]{MR2189882})
if $s\gg 0$ that $\Gamma_P$ acts on the horoball $N_{\bP}\times X_{\bP}\times A_{\bP,s}$
and a fundamental domain for this action is given by
$$ F\cap N_{\bP}\times X_{\bP}\times A_{\bP,s} = \omega\times A_{\bP,s}.$$
Hence, if $s,y \gg 0$ and $B_y\subset N_{\bP}\times X_{\bP}\times A_{\bP,s}$
the ball $B_y$ meets only $\Gamma_P$-translates of $F$, and consequently
$$ \Gamma'\subset \Gamma_P\subset  N_{\bP}M_{\bP}.$$

This means that  the action of $\gamma\in \Gamma'$ on points in $F$ does not change the $y$ coordinate and  for $\gamma\in \Gamma'$ we therefore have 
$ (\gamma\omega)\times \{\e^{yH}\} \subset B_y(r_0)$ and even
$$ (\gamma\omega)\times \{\e^{tH} : t\in (y-\varepsilon, y+\varepsilon) \} \subset B_y(r_0),$$
with some $\varepsilon >0$ (independent of $y$). The volume of the set 
$ (\gamma\omega)\times \{\e^{tH} : t\in (y-\varepsilon, y+\varepsilon) \}$ is 
$c_0 \e^{-y||\rho_{\bP}||}$ for some positive constant $c_0$ (independent of $\gamma$ and $y$) and
the claim follows.
\end{proof}

%%%%%%%%%%%%%%%%
\section{Eigenfunction estimates}

In this section $X=G/K$ denotes always a symmetric space of non-compact type and
$\Gamma\subset G$ a non-uniform arithmetic lattice. 

\begin{lemma}\label{compact subset}
 There is a compact subset $M_0\subset M=\Gamma\backslash X$ of co-dimension zero such that
 $$ \langle \Delta_{M} f, f\rangle \geq || \rho_{\bP} ||^2 \cdot ||f||_{L^2}$$
 for all $f\in C_c^{\infty}(M\setminus M_0)$.
\end{lemma}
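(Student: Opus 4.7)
The idea is that the bottom of the essential spectrum of $\Delta_M$ is $\|\rho_{\bP}\|^2$, and this reveals itself once one works in rational horocyclic coordinates on a cusp. First I would choose the compact set. By the precise reduction theory (Theorem \ref{precise reduction theory}), fix $T_j>t_j$ large and set
$$M_0 \;=\; \pi\bigl(\overline{\Omega_0}\bigr)\;\cup\;\bigcup_{j=1}^{k}\pi\Bigl(\overline{\omega_j\times\{a\in A_{\bP_j}:t_j<\alpha(a)\le T_j\}}\Bigr).$$
Choosing the $T_j$ large enough makes $M_0$ compact (each Siegel factor $\omega_j$ projects compactly into $(\Gamma\cap P_j)\backslash N_{\bP_j}\times X_{\bP_j}$) and ensures $M\setminus M_0$ is the disjoint union of the far parts $C_j:=\pi(\omega_j\times A_{\bP_j,T_j})$ of the finitely many cusps. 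Any $f\in C_c^\infty(M\setminus M_0)$ splits as a sum $f=\sum_jf_j$ with $f_j$ compactly supported in $C_j$, and both sides of the asserted inequality are additive over this decomposition, so I may fix one $j$ and work on a single cusp.

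Second, I pull $f_j$ back to the Siegel set $\omega_j\times A_{\bP_j,T_j}\subset X$ (injectively, by Theorem \ref{precise reduction theory}(1)) and use the rational horocyclic coordinates $(x,y)$ from Section 2.3, so that $\log a=yH$ with $\|H\|=1$. By the orthogonality statement in Proposition 2.9(a) together with $\|H\|=1$, the coordinate vector $\partial/\partial y$ is a unit vector orthogonal to the $N_{\bP_j}$- and $X_{\bP_j}$-directions. Consequently
$$\langle\Delta_M f,f\rangle_{L^2(M)}\;=\;\int_M|\nabla f|^2\,dvol_M\;\ge\;\sum_j\int\bigl(\partial_y f_j\bigr)^2\,h(x)\,e^{-2\|\rho_{\bP}\|y}\,dx\,dy,$$
so it suffices to prove, for each $j$, that
$$\int(\partial_y f_j)^2\,h(x)\,e^{-2\|\rho_{\bP}\|y}\,dx\,dy\;\ge\;\|\rho_{\bP}\|^{2}\int f_j^{\,2}\,h(x)\,e^{-2\|\rho_{\bP}\|y}\,dx\,dy.$$

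Third comes the one-dimensional Hardy-type identity. Substitute $f_j(x,y)=e^{\|\rho_{\bP}\|y}u(x,y)$; then $u$ is still compactly supported in the open cusp, and a direct computation gives
$$\int f_j^{\,2}\,h(x)e^{-2\|\rho_{\bP}\|y}dx\,dy=\int u^{2}h(x)\,dx\,dy,$$
while
$$\int(\partial_y f_j)^2h(x)e^{-2\|\rho_{\bP}\|y}dx\,dy=\int h(x)\Bigl[\|\rho_{\bP}\|^2u^2+\|\rho_{\bP}\|\,\partial_y(u^2)+(\partial_y u)^2\Bigr]dx\,dy.$$
The middle cross term vanishes by integration by parts in $y$, since $u$ has compact support in the open strip $y>T_j$, so there are no boundary contributions. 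The last term is non-negative. Combining these two facts gives the desired inequality on each cusp, and summing over $j$ finishes the proof.

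The only genuinely delicate point is justifying the integration by parts in $y$, which is exactly where the choice of $M_0$ enters: since $\mathrm{supp}(f)$ is compact inside the open set $M\setminus M_0$, the function $u$ vanishes identically near $y=T_j$ and for $y$ large, so no surface terms arise. Everything else is bookkeeping in rational horocyclic coordinates using the metric and volume form from Proposition 2.9 and its corollary.
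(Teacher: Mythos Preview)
Your argument is correct. The paper does not supply its own proof of this lemma; it simply refers to \cite[Lemma 10]{MR937635}. What you have written is the standard direct proof in rational horocyclic coordinates, and it is exactly the kind of argument one expects behind that citation: throw away the non-negative transverse part of $|\nabla f|^2$, then run the one-dimensional Hardy-type identity in the $y$-variable against the weight $\e^{-2\|\rho_{\bP}\|y}$. Your substitution $f_j=\e^{\|\rho_{\bP}\|y}u$ makes this transparent, the cross term $\|\rho_{\bP}\|\,\partial_y(u^2)$ integrates to zero because $h$ depends only on $x$ (Corollary~2.10) and $u$ is compactly supported in $y>T_j$, and the remaining term $(\partial_y u)^2$ is non-negative. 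The use of Theorem~\ref{precise reduction theory} to decompose $M\setminus M_0$ into disjoint cusp ends, and of Proposition~2.9 to see that $\partial/\partial y$ is unit and orthogonal to the other factors, is exactly right. In short, your proof is a self-contained substitute for the cited reference, and nothing more needs to be said.

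A minor remark: the right-hand side in the displayed inequality of the lemma should of course read $\|\rho_{\bP}\|^2\cdot\|f\|_{L^2}^2$ (as is implicit in the application in Proposition~\ref{s(r)}), and that is what you prove.
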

For a proof of this lemma we refer to \cite[Lemma 10]{MR937635}.

\begin{proposition}\label{s(r)}
 Let  $M=\Gamma\backslash X$, $f$  denote an eigenfunction of $\DM$ with respect 
 to some eigenvalue $\lambda < ||\rho_{\bP}||^2$,  $p_0\in M$ some arbitrary point in $M$, 
 and $S(R) := \{ p\in M : d(p,p_0) >R\}$. Then there is a constant $C>0$ such that
 \begin{equation}
  \int_{S(R)} |f|^2 dvol_M \leq C \exp\left(- 2R \sqrt{ ||\rho_{\bP}||^2 - \lambda}\right).
 \end{equation}
\end{proposition}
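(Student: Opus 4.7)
The estimate is a standard Agmon--Combes--Thomas-type exponential decay of $L^2$-eigenfunctions below the essential spectrum. Lemma \ref{compact subset} provides the crucial spectral gap at infinity: on $C_c^\infty(M\setminus M_0)$ one has $\Delta_M - \lambda \geq ||\rho_{\bP}||^2 - \lambda > 0$ as a quadratic form. The plan is to convert this gap into exponential decay of $f$ via a weighted $L^2$ identity, using finite propagation speed of the wave equation (as in \cite{MR658471}) to reach the sharp rate.

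The core step is the following weighted inequality. Fix $\alpha \in (0,\sqrt{||\rho_{\bP}||^2 - \lambda})$, set $\phi_N(x) := \min(\alpha\,d(x,p_0), N)$ (bounded and $\alpha$-Lipschitz), and let $u_N := e^{\phi_N}f \in L^2(M)$. Expanding $|\nabla u_N|^2$ and eliminating $\int e^{2\phi_N}|\nabla f|^2\,dvol_M$ via integration by parts against $\Delta_M f = \lambda f$ gives
\begin{equation*}
 \int_M |\nabla u_N|^2 \,dvol_M = \int_M e^{2\phi_N}|\nabla\phi_N|^2 f^2 \,dvol_M + \lambda||u_N||_{L^2}^2 \leq (\alpha^2 + \lambda)||u_N||_{L^2}^2.
\end{equation*}
On the other hand, an IMS-type partition of unity separating a small open neighbourhood $K$ of $M_0$ from $M\setminus M_0$, combined with Lemma \ref{compact subset} applied to the piece supported in $M\setminus M_0$, yields
\begin{equation*}
 \int_M |\nabla u_N|^2 \,dvol_M \geq ||\rho_{\bP}||^2 \cdot ||u_N||_{L^2}^2 - C\cdot ||u_N||_{L^2(K)}^2.
\end{equation*}
Since $\phi_N$ is bounded on the compact set $K$ uniformly in $N$, the last term is at most $C'||f||_{L^2}^2$. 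Combining the two inequalities and letting $N\to\infty$ by monotone convergence gives $\int_M e^{2\alpha d(\cdot,p_0)}|f|^2 \,dvol_M \leq C_\alpha < \infty$; factoring out $e^{-2\alpha R}$ on $S(R)$ then yields $\int_{S(R)}|f|^2\,dvol_M \leq C_\alpha e^{-2\alpha R}$ for every $\alpha$ strictly below the sharp rate.

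The main obstacle is promoting this to the sharp rate $\sqrt{||\rho_{\bP}||^2 - \lambda}$ appearing in the statement, since $C_\alpha$ blows up like $(||\rho_{\bP}||^2 - \lambda - \alpha^2)^{-1}$ at the threshold, and a naive optimization in $\alpha$ only gives a bound with an extra polynomial factor in $R$. To reach the sharp constant I would invoke the wave equation directly: since $\Delta_M f = \lambda f$, for any even Schwartz function $\psi$ with $\hat\psi$ supported in $[-R,R]$ we have $\psi(\sqrt{\lambda})f = \psi(\sqrt{\Delta_M})f$, and by finite propagation speed the Schwartz kernel of $\psi(\sqrt{\Delta_M})$ is supported in $\{(x,y)\in M\times M : d(x,y)\leq R\}$. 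A careful choice of $\psi$ adapted to the spectral gap $||\rho_{\bP}||^2 - \lambda$, together with a Plancherel-type estimate for $\psi(\sqrt{\Delta_M})$, then converts the localization at scale $R$ into the sharp bound $C\exp(-2R\sqrt{||\rho_{\bP}||^2 - \lambda})$ without any $\varepsilon$-loss. Coordinating the weighted $L^2$ identity with the Cheeger--Gromov--Taylor wave-kernel estimate is where the substantive work of the proof lies.
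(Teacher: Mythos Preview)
Your Agmon--Combes--Thomas argument for the sub-sharp rate is correct, but the second half of your proposal is not a proof: it is a wish. You write that ``a careful choice of $\psi$ adapted to the spectral gap'' together with ``a Plancherel-type estimate'' will produce the sharp constant, and that ``coordinating'' the two pieces ``is where the substantive work of the proof lies''---but you do not do that work. In particular, invoking $\psi(\sqrt{\Delta_M})$ on all of $M$ cannot exploit the gap $||\rho_{\bP}||^2-\lambda$, because $\Delta_M$ has no such gap: there may be eigenvalues arbitrarily close to $\lambda$. The identity $\psi(\sqrt{\Delta_M})f=\psi(\sqrt{\lambda})f$ is true but tautological; it does not by itself produce any decay of $f$ on $S(R)$.

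The paper's argument is short and bypasses the Agmon step entirely. The key points you are missing are these. First, one passes to the \emph{Dirichlet} Laplacian $\Delta_{M\setminus M_0}$, whose spectrum, by Lemma~\ref{compact subset}, is contained in $[\,||\rho_{\bP}||^2,\infty)$; this is where the gap lives. Second, if $\psi\in C_c^\infty(M)$ equals $1$ near $M_0$ and $h:=(1-\psi)f$, then because $f$ is an eigenfunction the error
\[
  \varphi:=(\Delta_{M\setminus M_0}-\lambda)h
\]
is \emph{compactly supported}: all the terms involving $f$ alone cancel, and only derivatives of $\psi$ survive. Third, with $A:=(\Delta_{M\setminus M_0}-||\rho_{\bP}||^2)^{1/2}$ and $l:=\sqrt{||\rho_{\bP}||^2-\lambda}$, the spectral theorem gives the exact resolvent formula
\[
  h=(A^2+l^2)^{-1}\varphi=\frac{1}{2l}\int_{-\infty}^{\infty}\e^{-l|t|}\cos(tA)\varphi\,dt.
\]
Finite propagation speed places $\supp\cos(tA)\varphi$ inside $B(p_0,a+|t|)$ for some fixed $a$, so on $S(R)$ only the tail $|t|>R-a$ contributes; combined with $||\cos(tA)||_{L^2\to L^2}\le 1$ this immediately yields $||h||_{L^2(S(R))}\le C\e^{-lR}$ with the sharp rate and no $\varepsilon$-loss. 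Since $h=f$ on $S(R)$ for $R$ large, the proposition follows.

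In short: your weighted-energy route is a genuine alternative for the sub-sharp estimate, but to reach the stated constant you need the two ingredients above---shifting to the Dirichlet problem on $M\setminus M_0$, and the compact support of $(\Delta-\lambda)\big((1-\psi)f\big)$---rather than a vague appeal to Cheeger--Gromov--Taylor.
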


\begin{proof}
We choose $M_0\subset M$ as in Lemma \ref{compact subset} and a function 
$\psi \in C_c^{\infty}(M)$ with $\psi \equiv 1$ on a neighborhood of $M_0$. We further
put 
$$h := (1-\psi)f$$
and denote by $\Delta_{M\setminus M_0}$ the $L^2$ Laplace-Beltrami operator on
$M\setminus M_0$ with Dirichlet boundary conditions. Then the function $h$
is contained in the domain of $\Delta_{M\setminus M_0}$ and we have
$$ \varphi := (\Delta_{M\setminus M_0} - \lambda)h \in C_c^{\infty}(M\setminus M_0).$$
From Lemma \ref{compact subset} it follows that 
$\sigma(\Delta_{M\setminus M_0}) \subset [ ||\rho_{\bP}||^2, \infty)$
and we may define by the spectral theorem the operator
$$ A:= \Big(\Delta_{M\setminus M_0} - ||\rho_{\bP}||^2\Big)^{1/2}$$
on $L^2(M\setminus M_0)$.
If we make the definition
$$ l= \sqrt{ ||\rho_{\bP}||^2 - \lambda},$$
we obtain
\begin{eqnarray*}
 h  &=& (\Delta_{M\setminus M_0} - \lambda)^{-1}\varphi\\
 	&=& ( A^2 + l^2)^{-1}\varphi\\
	&=& \frac{1}{2l}\int_{-\infty}^{\infty} \e^{-l |t|} \cos(tA)\varphi\, dt,
\end{eqnarray*}
where the last step follows from the spectral theorem.

We now choose $a>0$ large enough such that 
$\supp \varphi$ and $M_0$ are contained in  $B(p_0, a)$. By finite propagation speed
of $\cos(tA)$ (cf. \cite{MR658471}) we may conclude that 
$$ \supp \cos(tA)\varphi \subset B(p_0, a + |t|).$$
For all $R> a$ we therefore have
$$ \left.h\right|_{S(R)} = \frac{1}{2l} \int_{ |t| > R-a} \e^{-l |t|} \left(\cos(tA)\varphi\right)\big|_{S(R)} dt.$$
Hence, by $||\cos(tA)||_{L^2\to L^2}\leq 1$, we obtain
\begin{eqnarray*}
 || h ||_{L^2(S(R))}  &\leq& \frac{1}{2l} \int_{|t| > R-a} \e^{-l |t|} || \varphi ||_{L^2(M\setminus M_0)} dt\\
 	&=& \frac{\e^{la} || \varphi ||_{L^2(M\setminus M_0)} }{l^2}\cdot \e^{-lR}\\
	&=& C\cdot\exp\left(-R  \sqrt{ ||\rho_{\bP}||^2 - \lambda}\right).
\end{eqnarray*}
As $h\big|_{S(R)} = f\big|_{S(R)}$ for large enough $R$ (the function
$\psi$ has compact support!), the result follows.
\end{proof}

\begin{theorem}\label{pointwise bound}
 Let $M= \Gamma\backslash X$ denote a locally symmetric space with $\qrank(\Gamma)=1,$
 $f$ an eigenfunction of $\DM$ with respect to some eigenvalue $\lambda < ||\rho_{\bP}||^2$,
 and $p_0\in M$ some arbitrary point in $M$. Then there is a constant $C>0$ such that
 \begin{equation}
  | f(p)| \leq C\exp\left\{ \left(||\rho_{\bP}|| - \sqrt{||\rho_{\bP}||^2-\lambda}\right)d(p,p_0)\right\}
 \end{equation}
 for any $p\in M$.
\end{theorem}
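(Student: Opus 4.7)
I propose to combine three ingredients: an interior elliptic/Sobolev estimate on the universal cover $X$, where the geometry is uniform; the translate-counting Lemma \ref{k(y)}; and the exponential $L^2$-tail bound of Proposition \ref{s(r)}. The conceptual point is that the injectivity radius of $M$ collapses in the cusps, so a direct sub-mean-value inequality on $M$ is unavailable. Lifting to $X$ restores uniform local geometry, but one must then pay for the sheets of the covering that accumulate in a unit ball around the lift $\tilde p$ of $p$; this is precisely the quantity bounded in Lemma \ref{k(y)}, and the theorem's exponent will emerge from balancing that multiplicity against the $L^2$-decay rate.

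\textbf{Steps.} First I would reduce to the case of $p$ lying deep in one of the cusps $\pi(\omega_j\times A_{\bP_j,t_j})$ of Theorem \ref{precise reduction theory}; over any fixed compact region of $M$ the estimate is immediate, since $f$ is continuous there while the right-hand side is uniformly bounded below. Writing such a $p$ as $\pi(\tilde p)$ with $\tilde p=(w_0,\e^{yH})$ in the fundamental domain, the rational horocyclic coordinates yield $y=d(p,p_0)+O(1)$. Let $\tilde f$ be the $\Gamma$-invariant lift of $f$, which satisfies $\Delta_X\tilde f=\lambda\tilde f$. Interior elliptic regularity for $\Delta_X$ combined with Sobolev embedding on $B_X(\tilde p,1)$ produces a constant $C_1$, uniform in $\tilde p$ by homogeneity of $X$, such that
\begin{equation*}
|f(p)|^2=|\tilde f(\tilde p)|^2\leq C_1\int_{B_X(\tilde p,1)}|\tilde f|^2\,dvol_X.
\end{equation*}
Decomposing the integral over the translates of the fundamental domain meeting the ball, and using that $\pi$ is injective on each $\gamma F$ and $\pi(B_X(\tilde p,1))=B_M(p,1)$, one obtains
\begin{equation*}
\int_{B_X(\tilde p,1)}|\tilde f|^2\,dvol_X=\sum_{\gamma\in\Gamma'}\int_{\gamma F\cap B_X(\tilde p,1)}|\tilde f|^2\,dvol_X\leq k(y)\int_{B_M(p,1)}|f|^2\,dvol_M,
\end{equation*}
where $\Gamma'=\{\gamma:\gamma F\cap B_X(\tilde p,1)\neq\emptyset\}$ has cardinality $k(y)$. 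Lemma \ref{k(y)} bounds $k(y)\leq C\e^{2||\rho_{\bP}||y}$, while the inclusion $B_M(p,1)\subset S(d(p,p_0)-1)$ together with Proposition \ref{s(r)} yields
\begin{equation*}
\int_{B_M(p,1)}|f|^2\,dvol_M\leq C_2\exp\bigl(-2\,d(p,p_0)\sqrt{||\rho_{\bP}||^2-\lambda}\bigr).
\end{equation*}
Substituting $y=d(p,p_0)+O(1)$, chaining these inequalities, and taking square roots gives the stated pointwise bound.

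\textbf{Main obstacle.} The crux of the proof is the numerical coincidence that the exponent $2||\rho_{\bP}||$ supplied by Lemma \ref{k(y)} is exactly twice the cusp volume-growth rate and interacts cleanly with the $L^2$-tail exponent $-2\sqrt{||\rho_{\bP}||^2-\lambda}$ to produce the claimed exponent $||\rho_{\bP}||-\sqrt{||\rho_{\bP}||^2-\lambda}$; this is not accidental but reflects the geometry of the Langlands decomposition of the cusp. The technical points that need care are (i) ensuring that the elliptic constant $C_1$ is genuinely independent of the base point, which uses the transitive action of $G$ on $X$, and (ii) justifying the asymptotic identification $y=d(p,p_0)+O(1)$ in the cusp, which follows from the explicit form of the metric in rational horocyclic coordinates since the $A_{\bP}$-direction is geodesic and nearly realizes the distance to any fixed point $p_0$ outside the cusp. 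Beyond these, the argument is a bookkeeping of the three inputs quoted above.
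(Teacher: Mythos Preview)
Your proposal is correct and follows essentially the same route as the paper's proof: lift to $X$, use a uniform Sobolev/elliptic estimate on unit balls in $X$ to pass from pointwise values to $L^2$ over $B_X(\tilde p,1)$, decompose that integral over $\Gamma$-translates of the fundamental domain, bound the number of translates by Lemma~\ref{k(y)}, and control each piece by Proposition~\ref{s(r)}. The only cosmetic difference is that the paper verifies the uniformity of the elliptic/Sobolev constant by writing out geodesic normal coordinates and invoking $\nabla R=0$ and bounded curvature, whereas you appeal directly to the homogeneity of $X$; these are equivalent justifications of the same fact.
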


\begin{proof}
 We consider $f$ as a $\Gamma$-invariant function on the symmetric space $X$
 of non-compact type. A fundamental domain $F\subset X$ for $\Gamma$ is given
 by the (disjoint) union of finitely many Siegel sets:
 $$ F = \Omega_0\cup\coprod_{j=1}^l \omega_j\times A_{\bP_j, t_j}.$$
  As $f$ is smooth by elliptic regularity, it suffices to estimate $f$ in the unbounded Siegel sets. Let us consider in the following the Siegel set $\omega\times A_{\bP,t}$.\\
  
 We choose a point $(w_0,\e^{yH})\in\omega\times A_{\bP,t}$ and denote as above by 
 $$ B_y(r) := \{ x\in X : d( (w_0,\e^{yH}), x) < r\}$$
 the metric ball in $X$ with center $(w_0,\e^{yH})$ and radius $r$, furthermore we put
 $B_y=B_y(1)$.

 \paragraph{Geodesic normal coordinates}
 
 We now consider geodesic normal coordinates 
 $$ \psi : X\to \R^n, x\mapsto (u_1,\ldots,u_n)$$
 of $X$ with respect to the point $(w_0,\e^{yH})\in X$ for some $y\geq t$.
 Note that these coordinates may be defined globally as $X$ is simply connected and non-positively
 curved.
 
 The representation $(\tilde{g}_{ij})$ of the metric $g$ with respect to $\psi$ has 
 the following Taylor expansion around $0\in\R^n$:
 $$
  \tilde{g}_{ij}(u) = \delta_{ij} + \frac{1}{3}\sum_{k,l}R_{ikjl}(w_0,\e^{yH})u_ku_l + O(||u||^3).
 $$
 All the coefficients in this Taylor expansion are polynomials in the components of the
 Riemannian curvature tensor and its covariant derivatives (all evaluated at the point
  $(w_0,\e^{yH})$). See e.g. \cite[p. 41, Proposition 3.1]{MR1390760}.
  
 As the curvature of $X$ is bounded and $\nabla R=0$ ($X$ is a symmetric space!) we
 may conclude that the functions
 $\tilde{g}_{ij}(u)$ are bounded on $\psi(B_y)$. Furthermore, the bounds can
 be chosen such that they do {\em not} depend on $y\geq t$.
 
 From the representation
 $$
 \DX = \frac{1}{ \sqrt{\det(\tilde{g}_{ij})}} 
 	\sum_{l,k} \partial_l\Big(\tilde{g}^{lk}\sqrt{\det(\tilde{g}_{ij})} \partial_k  \Big)
 $$
 it then follows that in geodesic normal coordinates the Laplace-Beltrami operator $\DX$
 on $\psi(B_y)$ is a uniformly elliptic operator whose ellipticity bounds
 do {\em not} depend on $y\geq t$.
  
 %%%%% 
 \paragraph{Sobolev embedding}(cf. \cite[Section 7.7]{MR737190}).
 We use geodesic normal coordinates $\psi$.\\
 For  $k > \frac{n}{2},\, n=\dim X$, and some $r>0$ we have
 $$ W^{k,2}( \psi(B_y(r)) \hookrightarrow
 	C( \psi(B_y(r))),$$
 more precisely, there is a constant $C>0$ (only depending on $n$) such that
 \begin{equation}
  \sup_{ \psi(B_y(r))}|f\circ\psi^{-1}(u)| \leq
  C | \psi(B_y(r))|^l\cdot 
  	||f\circ\psi^{-1}||_{W^{k,2}( \psi(B_y(r)))},
 \end{equation}
 where $l>0$ depends only on $n$ and $| \psi(B_y(r))|$
 denotes the Lebesgue measure of the set $ \psi(B_y(r))\subset\R^n$.
 Note also, that the $W^{k,2}$ norm is built with respect to Lebesgue measure.
 
  By construction and by the representation $(\tilde{g}_{ij})$ of the metric $g$ on $X$ there
 is a constant $C_1>0$ (not depending on $y$) such that
 $$ | \psi(B_y(r))| \leq C_1 \vol_X (B_y(r)).$$
 The right hand side of this inequality does not depend on $y$ as $X$ is a homogeneous space.
 Therefore, we may find a constant $C>0$ (only depending on $n$) such that
 \begin{equation}
   \sup_{  \psi(B_y(r))}|f\circ\psi^{-1}(u)| \leq
   C ||f\circ\psi^{-1}||_{W^{k,2}(  \psi(B_y(r)) )}
 \end{equation}
 for $k> \frac{n}{2}$.
 %%%%%%
 \paragraph{Elliptic regularity}(cf. \cite[Section 8.3]{MR737190}).
 As $\DX$ is uniformly elliptic with respect to normal coordinates and as the ellipticity bounds
 on $B_y(r')$  do not depend on $y$ (for some $r'>0$ ), we may find  by elliptic regularity  a constant  $C_1>0$ only depending on $n$ such that  the following inequality holds:
 \begin{equation}
  ||f\circ\psi^{-1}||_{W^{k,2}(  \psi(B_y(r)) )} \leq
  C_1 \cdot ||f\circ\psi^{-1}||_{L^2(  \psi(B_y(r'))},
 \end{equation}
where $r'>r$ and the $L^2(  \psi(B_y(r')))$ norm is built with respect to Lebesgue measure. \\

 As the coefficients $\tilde{g}_{ij}$ of the metric $g$ in normal coordinates
 are (independently of $y\geq t$) bounded on 
 $ \psi(B_y(r))$ there is a constant $C$ (again independent of
 $y$) such that
 $$   ||f\circ\psi^{-1}||_{L^2(  \psi(B_y))} \leq
 	C || f ||_{L^2( B_y)} $$
where the norm $|| \cdot ||_{L^2(B_y)}$ is built with repect to the
Riemannian metric $g$ on $X$. 

Putting all inequalities together, we obtain
\begin{equation}
    \sup_{B_y(r)}|f(x)| \leq  C || f ||_{L^2(B_y)}
\end{equation}
with some constant $C>0$ that does only depend on $n$ and $0<r<1$ (and is in particular independent of $y$).\\

Now, if the ball $B_y$ intersects $k(y)$ different $\Gamma$ translates of $F$, i.e.
\begin{eqnarray*}
 k(y) &=& \#\{ \gamma\in\Gamma : \gamma F\cap B_y\neq\emptyset \} 
\end{eqnarray*} 
it follows from Propostion \ref{s(r)} and the inequality just derived 
$$ \sup_{B_y(r)}|f(x)| \leq  C\cdot \sqrt{k(y)}\cdot  \exp\left(- y \sqrt{ ||\rho_{\bP}||^2 - \lambda}\right).$$
The claim now follows from Lemma \ref{k(y)}.
 \end{proof}

By the decomposition principle (cf. \cite{MR544241})  compact perturbations of the Riemannian metric of $M$ leave the continuous $L^2$ spectrum (if considered as a subset of $\R$) invariant. Therefore, an
analysis of the proof of Theorem \ref{pointwise bound} shows that the preceeding result admits the following generalization.

\begin{theorem}\label{generalization}
Let $M$ be as in Theorem \ref{pointwise bound} and $h$ a compact perturbation of the
respective locally symmetric metric.
 If $f$ denotes an $L^2$ eigenfunction of $\Delta_{(M,h)}$ with respect to some eigenvalue 
 $\lambda < ||\rho_{\bP}||^2$ and $p_0\in M$ some arbitrary point in $M$ then there is a 
 constant $C>0$ such that
 \begin{equation}
  | f(p)| \leq C\exp\left\{ \left(||\rho_{\bP}|| - \sqrt{||\rho_{\bP}||^2-\lambda}\right)d(p,p_0)\right\}
 \end{equation}
 for any $p\in M$.
\end{theorem}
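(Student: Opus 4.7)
The plan is to reduce the statement to the unperturbed case (Theorem \ref{pointwise bound}) by exploiting that $h$ is a compact perturbation. Let $K \subset M$ be a compact set containing the region on which $h$ disagrees with the locally symmetric metric, and enlarge the compact set $M_0$ of Lemma \ref{compact subset} to a compact subset $M_0' \supset M_0 \cup K$, possibly enlarging further so that its complement lies entirely in the union of Siegel set portions $\omega_j\times A_{\bP_j, s_j}$ for some $s_j > t_j$. Since the perturbation support is compact, one can in addition arrange that for every point $p$ in the complement of $M_0'$ the closed unit ball around $p$ avoids $K$. On $M\setminus M_0'$ the two metrics coincide; hence Lemma \ref{compact subset} gives
\[
\langle \Delta_{(M,h)} \varphi, \varphi\rangle \geq ||\rho_{\bP}||^2 \cdot ||\varphi||_{L^2}^2
\]
for every $\varphi \in C_c^{\infty}(M\setminus M_0')$, and the Dirichlet Laplacian $\Delta_{M\setminus M_0', h}$ has spectrum in $[||\rho_{\bP}||^2, \infty)$.

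Next, I would rerun the proof of Proposition \ref{s(r)} verbatim with $M_0$ replaced by $M_0'$. Choose $\psi \in C_c^{\infty}(M)$ with $\psi \equiv 1$ near $M_0'$, set $u := (1-\psi) f$, and let $\varphi := (\Delta_{(M,h)} - \lambda) u \in C_c^{\infty}(M\setminus M_0')$. Writing $A := (\Delta_{M\setminus M_0', h} - ||\rho_{\bP}||^2)^{1/2}$ and $l := \sqrt{||\rho_{\bP}||^2 - \lambda}$, the spectral theorem yields
\[
u = \frac{1}{2l} \int_{-\infty}^{\infty} \e^{-l|t|} \cos(tA)\,\varphi\, dt .
\]
Finite propagation speed for $\cos(tA)$ is a general property of self-adjoint second-order elliptic operators on complete Riemannian manifolds (cf.\ \cite{MR658471}) and is insensitive to the compact perturbation; the same computation as in Proposition \ref{s(r)} therefore produces
\[
\int_{S(R)} |f|^2 \, dvol_{(M,h)} \leq C \exp\!\left(-2R \sqrt{||\rho_{\bP}||^2 - \lambda}\right) .
\]

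Finally, I would pass from this $L^2$-bound to the pointwise estimate by splitting into two cases. If $p \in M \setminus M_0'$, then $h$ equals the locally symmetric metric on the unit ball around $p$ used in the proof of Theorem \ref{pointwise bound}, so the Sobolev embedding, the uniform elliptic regularity on that ball, and the counting Lemma \ref{k(y)} apply exactly as in that proof and deliver the desired bound. If $p \in M_0'$, elliptic regularity for $\Delta_{(M,h)}$ shows that $f$ is smooth on the compact set $M_0'$ and hence bounded there, while $d(p,p_0)$ is bounded on $M_0'$, so the inequality holds trivially after enlarging $C$. The only point that requires attention, rather than a genuine obstacle, is the initial choice of $M_0'$: it must simultaneously contain the perturbation region and be compatible with the Siegel set description of the cusps so that unit balls centered in its complement remain inside the unperturbed symmetric region. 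Since the perturbation support is compact and the Siegel sets exhaust the cuspidal ends, this is always possible; everything else in the argument is identical to the proof of Theorem \ref{pointwise bound}.
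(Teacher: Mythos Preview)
Your proposal is correct and is precisely the ``analysis of the proof of Theorem \ref{pointwise bound}'' that the paper alludes to without spelling out; the paper gives no separate proof environment for this theorem, only the remark preceding it that the decomposition principle keeps the continuous spectrum unchanged and hence the earlier argument goes through. Your write-up is in fact slightly more self-contained than the paper's sketch: rather than invoking the decomposition principle to locate the continuous spectrum of $\Delta_{(M,h)}$, you observe directly that on $M\setminus M_0'$ the perturbed and unperturbed Dirichlet Laplacians coincide, which is all that is needed for Proposition \ref{s(r)} and the subsequent Sobolev/regularity steps.
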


\begin{remark}\label{remark}
It might be helpful to point out that the upper bounds in Theorem \ref{pointwise bound} also
follow from the theory of constant terms in automorphic forms. Briefly, an automorphic form is bounded by its constant term on cusp ends. The theory of automorphic forms also shows that the upper bounds
in Theorem \ref{pointwise bound} are sharp. 
Compact perturbations as in Theorem \ref{generalization} prevent the full application of the method of
automorphic forms, but the method of Fourier decompostion can still be applied to each cusp and might
be used to obtain similar upper bounds on eigenfunctions.

On the other hand, the method in this paper can be used to prove the same upper bound as in 
Theorem \ref{generalization} for perturbed metrics $g$ of the invariant metric $g_0$ of the
locally symmetric space $M=\Gamma\backslash X$ such that in each cusp end, 
when $y\to +\infty$, the difference
$g-g_0$ and its derivatives up to second order are of smaller order than $\e^{-2||\rho_{\bP}||y}$.
Briefly, under such perturbations, the continuous spectrum remains unchanged by applying the
decomposition principle to larger and larger compact subsets of $M$ and by observing the dependence
of the spectrum of the Laplace operator on the metric.

For such perturbed metrics on $M$, one can not use the method of Fourier decompostion on cusp
ends to study eigenfunctions.
\end{remark}

%%%%%%
\section{$\boldsymbol{L^p}$ theory for eigenfunctions}\label{Lp theory for eigenfunctions}

Again, $M=\Gamma\backslash X$ denotes a locally symmetric space such that $X$
is a symmetric space of non-compact type and $\Gamma$ an arithmetic lattice with $\Q$-rank
one.\\

Let $f$ denote an $L^2$ eigenfunction of $\DM$ with respect to some eigenvalue $\lambda < ||\rho_{\bP}||^2$ below the continuous spectrum. 
From \cite[Proposition 3.3]{MR1016445} and the arguments in \cite{MR937635} it follows that
$f$ belongs to $L^p(M)$ whenever $\lambda$ lies outside the parabolic region 
$$P_p = \left\{ ||\rho_{\bP}||^2 - z^2 : z\in\C, |\re z|\leq ||\rho||\cdot |\frac{2}{p}-1|\right\}\subset \C.  $$
Of course, this result is only interesting if $p > 2$ as for $p\in [1,2]$ we always have
$L^p(M)\hookrightarrow L^2(M)$ since the volume of $M$ is finite. Therefore, we assume in the
following $p >2$.\\
The apex of the parabola $\partial P_p$ is at the point 
$z_p = ||\rho_{\bP}||^2- ||\rho||^2\cdot |\frac{2}{p}-1|^2\in \R$. Therefore, we have $\lambda < z_p$ and
hence $f\in L^p(M)$ if
$$ p < \frac{2}{ 1- \frac{1}{||\rho||}\sqrt{ ||\rho_{\bP}||^2-\lambda} }.$$
Our pointwise bound from Theorem \ref{pointwise bound} leads to a sharpening of this result:

\begin{theorem}\label{Lp eigenfunction}
 Let $M= \Gamma\backslash X$ denote a locally symmetric space with $\qrank(\Gamma)=1$ and
 $f$ an $L^2$ eigenfunction of $\DM$ with respect to some eigenvalue $\lambda < ||\rho_{\bP}||^2$.
 Then $f$ is contained in $L^p(M)$ if
 \begin{equation}\label{Lp estimate}
  p < \frac{2}{1-\sqrt{1-\frac{\lambda}{||\rho_{\bP}||^2}}}.
 \end{equation} 
\end{theorem}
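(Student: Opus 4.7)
The plan is to integrate the pointwise bound from Theorem \ref{pointwise bound} against the volume form expressed in rational horocyclic coordinates, restricted to the fundamental domain furnished by the precise reduction theorem. Write the fundamental domain as $F = \Omega_0 \cup \coprod_{j=1}^{l} \omega_j \times A_{\bP_j, t_j}$. Since $\Omega_0$ is bounded and $f$ is smooth by elliptic regularity, the integral $\int_{\pi(\Omega_0)} |f|^p\, dvol_M$ is finite for every $p \in [1, \infty)$. Thus the problem reduces to bounding the contribution of each Siegel set $\omega_j \times A_{\bP_j, t_j}$, which is where $L^p$-integrability may fail.

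Fix one such Siegel set $\omega \times A_{\bP, t}$, choose the reference point $p_0 \in \pi(\Omega_0)$, and work in the rational horocyclic coordinates $(x, y)$. The key geometric estimate is that on this Siegel set one has $d(p, p_0) \leq y + C_0$ for a constant $C_0 > 0$ independent of $y$: indeed, $t \mapsto e^{tH}\cdot x_0$ is a unit-speed geodesic (because $\|H\|=1$), the horocyclic cross section $\omega$ has bounded diameter, and the distance from $p_0$ to the base of the cusp is finite. Combining this with Theorem \ref{pointwise bound}, and writing $\kappa := \|\rho_{\bP}\| - \sqrt{\|\rho_{\bP}\|^2 - \lambda} > 0$, yields
\begin{equation*}
 |f(p)|^p \leq C_1 \exp(p\kappa y)
\end{equation*}
uniformly on the Siegel set, with $C_1$ depending only on $p$, $C_0$, and the constant from Theorem \ref{pointwise bound}.

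Next, invoke the volume form $dvol_X = h(x)\, e^{-2\|\rho_{\bP}\|y}\, dx\, dy$ from the corollary following the proposition in the section on rational horocyclic coordinates. Since $\omega$ is bounded and $h$ is smooth, $\int_\omega h(x)\, dx < \infty$. Hence
\begin{equation*}
 \int_{\omega \times A_{\bP,t}} |f|^p\, dvol_X \leq C_2 \int_t^\infty \exp\bigl((p\kappa - 2\|\rho_{\bP}\|) y\bigr)\, dy,
\end{equation*}
which converges precisely when $p\kappa < 2\|\rho_{\bP}\|$, i.e.
\begin{equation*}
 p < \frac{2\|\rho_{\bP}\|}{\|\rho_{\bP}\| - \sqrt{\|\rho_{\bP}\|^2 - \lambda}} = \frac{2}{1 - \sqrt{1 - \lambda/\|\rho_{\bP}\|^2}},
\end{equation*}
after rationalizing the denominator. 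Summing the finitely many Siegel-set contributions and adding the bounded term from $\pi(\Omega_0)$ gives $f \in L^p(M)$ under the stated condition.

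The main (minor) obstacle is verifying the distance estimate $d(p, p_0) \leq y + C_0$ on the Siegel set; once one notes that the $A_{\bP}$ direction parametrizes a unit-speed geodesic ray and $\omega$ has bounded diameter, this is immediate, and the computation of the convergence exponent becomes routine algebra. The rest is a direct substitution into the volume form, so no further subtleties are expected.
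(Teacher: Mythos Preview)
Your proposal is correct and follows essentially the same route as the paper: reduce to each Siegel set, plug the pointwise bound from Theorem~\ref{pointwise bound} into the integral with the horocyclic volume form $e^{-2\|\rho_{\bP}\|y}\,dx\,dy$, and determine when the resulting exponential integral in $y$ converges. The only difference is that you spell out the intermediate step $d(p,p_0)\leq y+C_0$ (via the unit-speed geodesic $t\mapsto e^{tH}\cdot x_0$ and the boundedness of $\omega$), which the paper leaves implicit when it passes directly to the estimate $\int_{\omega\times A_{\bP,t}}|f|^p\,dvol_X\leq C\int_t^\infty e^{(p\kappa-2\|\rho_{\bP}\|)y}\,dy$.
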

\begin{proof}
It is enough to look at the $L^p$ norm of the eigenfunction $f$ within a cusp $\omega\times A_{\bP,t}$.
With respect to rational horocyclic coordinates we have (note that the volume form
of $X$ in these coordinates is given by $\exp(-2||\rho_{\bP}||y)dxdy$)
$$ \int_{\omega\times A_{\bP,t}} |f|^p dvol_X \leq 
C\int_t^{\infty} \exp\Big\{ p(||\rho_{\bP}|| - \sqrt{||\rho_{\bP}||^2-\lambda})y \Big\}\exp(-2||\rho_{\bP}||y)dy.$$
The last integral is finite whenever
$$  p(||\rho_{\bP}|| - \sqrt{||\rho_{\bP}||^2-\lambda}) < 2||\rho_{\bP}||$$
and this is equivalent to the claim.
\end{proof}
Hence, an $L^2$ eigenvalue $\lambda < ||\rho_{\bP}||^2$ is also an $L^p$ eigenvalue, i.e. an eigenvalue for the Laplace-Beltrami operator $\DMp$ on $L^p(M)$, if the inequality (\ref{Lp estimate}) holds true.\\

We finally remark, that Theorem \ref{Lp eigenfunction} could be obtained by the same arguments as in 
 \cite{MR937635,MR1016445} if one could prove that the $L^p$ spectrum $\sigma(\DMp)$
 coincides with $\{\lambda_0,\ldots,\lambda_r\}\cup P_p'$ as the apex of the 
 parabola $\partial P_p'$ is at the point $||\rho_{\bP}||^2- ||\rho_{\bP}||^2\cdot |\frac{2}{p}-1|^2\in \R$.

%%%%%%%%%%%%%%BACKMATTER%%%%%%%%%%%%

\bibliographystyle{amsplain}
%\addcontentsline{toc}{chapter}{Bibliography}
\bibliography{dissertation}

\end{document}